\documentclass[reqno, 12pt]{amsart}
\usepackage{amssymb}
\textwidth=6 true in
\hoffset=-0.5 true in
\textheight=9 true in
\voffset=-0.4 true in
\usepackage{graphicx}

\theoremstyle{plain}
\newtheorem{theorem}{Theorem}[section]
\newtheorem{lemma}[theorem]{Lemma}

\newtheorem{corollary}[theorem]{Corollary}
\theoremstyle{definition}
\newtheorem{definition}[theorem]{Definition}

\newtheorem{example}[theorem]{Example}

\newtheorem{question}[theorem]{Question}

\numberwithin{equation}{section}

\makeatletter
\def\@map#1#2[#3]{\mbox{$#1 \colon #2 \longrightarrow #3$}}
\def\map#1#2{\@ifnextchar [{\@map{#1}{#2}}{\@map{#1}{#2}[#2]}}
\makeatother
\newcommand{\R}{\mathbb{R}}

\newcommand{\T}{\mathbb{T}}

\newcommand{\SI}{\ensuremath{\mathbb{S}^1}}
\newcommand{\eps}{\varepsilon}
\newcommand{\te}{\vartheta}
\renewcommand{\phi}{\varphi}

\newcommand{\fa}{f_\alpha}
\newcommand{\orb}{\mathcal{O}}
\newcommand{\SNA}{SNA}

\title{Skew Product Attractors and concavity}
\author{Llu\'{\i}s Alsed\`a}
\address{Departament de Matem\`atiques, Edifici Cc, Universitat
Aut\`onoma de Barcelona, 08913 Cerdanyola del Vall\`es, Barcelona,
Spain}
\email{alseda@mat.uab.cat}
\author{Micha\l\ Misiurewicz}
\address{Department of Mathematical Sciences, IUPUI, 402 N. Blackford
    Street, Indianapolis, IN 46202}
\email{mmisiure@math.iupui.edu}
\thanks{The first author has been partially supported by the MICINN
grant number MTM2008-01486.}
\subjclass[2010]{Primary: 37C55, 37C70}
\keywords{Skew product, attractor, quasiperiodic forcing,
concavity, monotonicity}
\date{May 8, 2012}


\begin{document}
\begin{abstract}
We propose an approach to the attractors of skew products that tries
to avoid unnecessary structures on the base space and rejects the
assumption on the invariance of an attractor. When nonivertible maps
in the base are allowed, one can encounter the mystery of the
vanishing attractor. In the second part of the paper, we show that if
the fiber maps are concave interval maps then contraction in the
fibers does not depend on the map in the base.
\end{abstract}
\maketitle

\section{Introduction}

We want to propose a unified approach to many situations where
attractors for skew products are considered. This includes random
systems, nonautonomous systems, strange nonchaotic attractors, etc.
(we let the reader to continue this list, warning that the terminology
may vary). This approach is built on existing ideas, but contains two
new ingredients. The first one is a realization that the space can
have various structures, so one should consider which results require
which structures. The second one is the introduction of the
possibility that an attractor is not invariant.
Moreover, admitting the possibility that
the base map is nonivertible, we encounter ``the mystery of the
vanishing attractor.'' An attractor present for an invertible map in
the base, vanishes when we forget about the past and replace the base
map by a noninvertible one. This happens in spite of the fact that the
future dynamics does not depend on the past. From the philosophical
point of view this can be interpreted as an application of Mathematics
to History: even if our future depends only on our present, making
predictions is much more consistent when we take our past into
account.\footnote{We hope that this remark will allow us to present
  this paper to the administrators in our universities as a result of
  the interdisciplinary research.}

In the second part of the paper we show how changing the point of view
allows us to pinpoint the real reasons for existence of a Strange
Nonchaotic Attractor if the fibers are one-dimensional and the maps in
the fibers are concave.

We concentrate on the discrete case, that is, on iterates of self-maps of
some space. Thus, we consider a skew product on a space $X=B\times Y$.
The space $B$ is the \emph{base} and $Y$ is the \emph{fiber space};
for each $\te\in B$ the set $\{\te\}\times Y$ is the \emph{fiber over}
$\te$. We will denote by $\pi_2:X\to Y$ the projection
$\pi_2(\te,x)=x$. The skew product $F:X\to X$ can be written as
\begin{equation}\label{skew}
F(\te,x)=(R(\te),\psi(\te,x)).
\end{equation}
Here $R$ is a map from the base to itself, and for any $\te\in B$ the
map $\psi_\te$, given by $\psi_\te(x)=\psi(\te,x)$ maps the fiber over
$\te$ to the fiber over $R(\te)$.

We are interested in the attractors. Attraction of the trajectories to
some set occurs in fibers, so we have to assume that the fiber space
$Y$ is a metric space. Now, from the point of view of the theory of
random maps (see, e.g.~\cite{Ar, CF}), what happens in the base is a
random process, which influences the maps in fibers. Thus, the system
in the base will be usually some measure-preserving transformation.
From the point of view of the theory of Strange Nonchaotic Attractors
(\emph{SNA}'s; see, e.g. a survey~\cite{JTNO}), often the topology
(and usually a differentiable structure) is added in the base, while
the invariant measure is kept there. Since those systems have usually
connections with physical models, this measure is in such cases
``natural''. The influence of what happens in the base is considered
as external forcing. From the point of view of the theory of
nonautonomous systems (this name is being used in various meanings; we
want to differentiate it from the random systems), no structure in the
base is necessary (see, e.g.~\cite{KMS}). Most often, this is just the
shift by 1 on (nonnegative) integers or a cyclic permutation on a
finite set.

Let us compare a skew product system with a ``usual'' one, that is,
with a continuous map $\Phi:Z\to Z$ of a metric space $Z$ to itself.
In both cases, we iterate the map (be it $F$ or $\Phi$) and look for
the attractors, that is, loosely speaking, sets to which trajectories
converge. However, in the usual case, this convergence is \emph{in the
whole space} $Z$, while in the skew product case the convergence is
\emph{fibrewise} (i.e. on the sets $\{\te\}\times Y$). This is the basic
difference and it has far reaching consequences.

While, especially in the applications, the phase spaces $Y$ or $Z$ are
not necessarily compact, quite often all interesting dynamics happens
in some compact subset of those spaces. Moreover, a large part of the
``pure'' theory of dynamical systems has been built under the
assumption that the phase space is compact. Therefore in the sequel we
will assume that \emph{$Y$ is a compact metric space}. Moreover, in
order to be able to state our ideas more clearly, at a certain moment
we will start to assume that $Y$ is a closed interval.

A remarkable paper on SNA's in this setting is the one by Keller
\cite{Keller} where a deep study on the existence and properties of
the attractors is conducted for systems on $\SI \times [0,+\infty)$,
of the form
\begin{equation}\label{SOri}
T\begin{pmatrix}\theta\\x \end{pmatrix} =
\begin{pmatrix} R_\omega(\theta)\\ p(x)q(\theta)\end{pmatrix},
\end{equation}
where $R_\omega(\theta) = \theta + \omega \pmod{1}$ is an irrational
rotation of the circle $\SI=\R/\T$, {\map{q}{\SI}[{[0,+\infty)}]}
is a continuous function and {\map{p}{[0,+\infty)}} is a continuous,
bounded strictly increasing and strictly concave function.
A lot of subsequent analytical studies on the attractors of similar
systems rely on the concavity of the function $p$ on the fibers
(\cite{AlsMis, Haro, Bjerk}) and some other studies deal with
the relation between the monotonicity of this function and the
existence of \SNA's (\cite{AlsMis, Bjerk}). We should also mention
papers \cite{Bjerk1, Bjerk2, Jager}, where certain techniques for
proving the existence of a SNA when the fiber is one-dimensional, were
developed.

When we analyze this system from our new point of view, we notice that
the existence of a SNA is caused solely by the concavity properties of
the maps in the fibers, and the use of the ergodic theory tools for
the basic results is practically unnecessary. Attraction (or
contraction) in the fibers is the result of concavity, and not of the
averaging.

The paper is organized as follows. In Section~\ref{sec-spa} we present
our general philosophy. Then in Section~\ref{sec-noninv} we study the
situation when a skew product has a noninvertible base map. We present
an example of a simple system for which a function from the base to
the fibers whose graph is an attractor exists, but such function
cannot be Borel or measurable for any invariant measure with positive
entropy. On the other hand, when you replace the system in the base by
its natural extension, there is a very regular (continuous,
measurable) function whose graph is an attractor. This attractor
vanishes when we forget about the past, although attraction is defined
by looking at the future behavior, so we see the mystery of the
vanishing attractor, mentioned earlier.

In the rest of the paper we consider systems with interval fibers and
concave maps on them. In Section~\ref{sec-ns} we forget about the
structure of the base space and we consider our skew product as a
bunch of full orbits. Then, by using the estimates and notions
introduced in Section~\ref{sec-ac}, we prove that for nonautonomous
dynamical systems concavity always implies contraction on the fibers
both when the fiber maps are monotone and nonmonotone.

In Section~\ref{sec-nea} we use the results from the preceding section
in the setting of skew products to study the influence of concavity on
the existence of attractors, their basin of attraction and the
positiveness almost everywhere of these attractors. We consider skew
products similar to the Keller ones, that is, where the fibers are
intervals of the form $[0,a]$ with $a > 0$ and the fiber maps
$\psi_\theta$ are such that $\psi_\theta(0) = 0$ for every $\theta$ in
the base $B$. Then, the set $B \times \{0\}$ is invariant. As usual,
the \emph{pinched set} is defined as the set of all points in the
fibers whose image is contained in $B \times \{0\}$ (and, due to the
invariance of this set, stay in $B \times \{0\}$ in all subsequent
iterates). There are three main conclusions of this section. First, we
show that concavity implies the existence of an attractor which is the
graph of a function from the base $B$ to the fiber space and the basin
of attraction of this attractor contains all points whose forward
trajectory is nonpinched. However, these attractors are not
necessarily invariant. Finally, we show that when additionally the
base map is invertible and preserves an ergodic invariant measure
$\mu$, then the function whose graph is the attractor is either 0
$\mu$-almost everywhere or positive $\mu$-almost everywhere. In the
latter case, the attractor is invariant and its basin of attraction is
of the form $Z \times (0,a]$, where $Z \subset B$ has full
$\mu$-measure.

\section{Skew Product Attractors}\label{sec-spa}

We consider the skew product~\eqref{skew}, with $Y$ a compact metric
space. As we mentioned in the introduction, we consider only
attraction in the fibers. That is, the distance of a point $p$ from
the attractor is measured as the distance of $p$ from the intersection
of the attractor and the fiber containing $p$. Consequently,
attraction means that this distance goes to zero.

The intersection of an attractor with a fiber should look like an
attractor for a usual system. In particular, it should be compact.
However, requiring its invariance is not a natural thing to do,
because (with obvious exceptions) the image of a fiber is a different
fiber. We will discuss this problem later. Normally for an attractor
$A$ one defines its \emph{basin of attraction} as the set of points
$p$ such that the distance of the $n$-th image of $p$ from $A$ goes to
0 as $n\to\infty$. Here we can repeat this definition, except that,
as said earlier, here ``distance'' means distance from the intersection of $A$ with the fiber
to which $F^n(p)$ belongs. Of course the basin of attraction of an
attractor should contain this attractor (in the usual case this
follows from invariance), and should be in some sense big. This
motivates the following definition.

\begin{definition}
A set $A\subset X$ is an \emph{attractor} of $F$ if
\begin{enumerate}
\item the intersection of $A$ with every fiber is compact;
\item the basin of $A$ contains $A$;
\item the basin of $A$ intersected with every fiber contains a
neighborhood of $A$ (topological attractor) or has positive measure
(measure attractor).
\end{enumerate}
\end{definition}

We have to add some comments. When we speak of positive measure, we
assume that there is some (natural) measure in the fiber space $Y$
that we use. The second comment is that instead of requiring that the
properties above hold for all fibers, we may require that they hold
for almost all fibers (here we assume that the base map is a measure
preserving transformation).

The next comment is about invariance. For the usual dynamical system,
an attractor is compact, and therefore it contains the $\omega$-limit
set of every point from its basin. Thus, we can replace it, if
necessary, by its subset defined as the closure of the union of the
$\omega$-limit sets of the points from its basin. This subset is
automatically invariant. Thus, the requirement that the attractor is
invariant is natural and in a sense, is satisfied automatically. In
the skew product case it is not even clear how the $\omega$-limit set
should be defined if we apply our fiberwise approach. Indeed,
(except in the periodic fibers, which often do
not exist or their union has measure zero) the distance between the
points is measured only in the same fiber, and the trajectory does not
visit the same fiber twice. Thus, there is no special reason to
require the attractor to be invariant.

In many cases a \emph{pullback attractor} is considered (see,
e.g.~\cite{CF, Sch}). Then, instead of taking a point and its forward
orbit, one goes back in time with an iterate of the base map, takes a
point there, and returns with the same iterate of the map $F$. While
we agree that this notion is very useful in many cases, we cannot
agree with the opinion presented in~\cite{Sch} that this notion is
better and more natural than the notion of the usual forward
attractor. The past may not exist. It may exist but be not unique. And
even if it exists and is unique, the aim of considering a dynamical
system is to try to predict the future from our knowledge of the
present, rather than to predict the present from our knowledge of the
past.

Now we pass to the special case that we investigate closer. Namely, we
assume that $Y$ is a closed interval, and the intersection of the
attractor with each fiber consists of one point. Then the whole
attractor is the graph of a function $\phi:B\to Y$. We assume also
that the attractor is almost global, that is, for every (or almost
every) fiber the intersection of its basin with the fiber contains all
points of the fiber except perhaps the endpoints.

We will finish this section by showing why it is a good idea to have a
measure structure in $B$. The first reason is the following theorem.

\begin{theorem}\label{unique}
Assume that for a skew product~\eqref{skew} there is an ergodic
invariant measure $\mu$ for $R$ on the base $B$. Then, if the graphs
of measurable functions $\phi_1,\phi_2:B\to Y$ are both attractors, it
follows that $\phi_1=\phi_2$ $\mu$-almost everywhere.
\end{theorem}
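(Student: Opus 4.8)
The plan is to exploit the fact that each graph is an attractor whose basin is almost global, so on a common fiber the two functions must both attract (almost) all points, and in particular each attracts the point lying on the other graph. Let me set up the key quantity: for $\te\in B$ consider $d(\te)=\mathrm{dist}(\phi_1(\te),\phi_2(\te))$, the distance between the two values in the fiber over $\te$. Since $\phi_1$ and $\phi_2$ are measurable, $d$ is a measurable function on $B$. The goal is to show $d=0$ $\mu$-almost everywhere.

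First I would use the attractor hypothesis for $\mathrm{graph}(\phi_1)$: for almost every fiber, the basin of $\mathrm{graph}(\phi_1)$ contains every point of the fiber except possibly the endpoints. The point $(\te,\phi_2(\te))$ is not an endpoint (or we discard the measure-zero set where it is, if endpoints even matter), so it lies in the basin of $\mathrm{graph}(\phi_1)$; that means $\mathrm{dist}(F^n(\te,\phi_2(\te)),\,\mathrm{graph}(\phi_1)\cap\text{fiber over }R^n\te)\to 0$. But $F^n(\te,\phi_2(\te))=(R^n\te,\psi^{(n)}(\te,\phi_2(\te)))$, and since $\mathrm{graph}(\phi_2)$ is forward-invariant-ish only in the weak sense that its basin contains it — here I need to be a little careful — the second coordinate of $F^n(\te,\phi_2(\te))$ need not be $\phi_2(R^n\te)$ unless $\mathrm{graph}(\phi_2)$ is invariant. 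So instead I would run the argument directly on $d$: I claim $d(R\te)\le $ (something contracting) is false in general, so the right move is an averaging / recurrence argument rather than a pointwise contraction.

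Here is the cleaner route. Fix $\eps>0$ and let $E_\eps=\{\te: d(\te)\ge\eps\}$; suppose for contradiction $\mu(E_\eps)>0$. For $\te$ in a full-measure set, the forward orbit of $(\te,\phi_2(\te))$ converges fibrewise to $\mathrm{graph}(\phi_1)$, and symmetrically the forward orbit of $(\te,\phi_1(\te))$ converges to $\mathrm{graph}(\phi_2)$. Now track both points $(\te,\phi_1(\te))$ and $(\te,\phi_2(\te))$ simultaneously under $F^n$: their second coordinates are $\psi^{(n)}(\te,\phi_1(\te))$ and $\psi^{(n)}(\te,\phi_2(\te))$. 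Let $\delta_n(\te)$ be the distance between these two points in the fiber over $R^n\te$. The first orbit gets close to $\phi_1(R^n\te)$ — wait, only if the first orbit's limit is $\mathrm{graph}(\phi_1)$, which it is by assumption that $(\te,\phi_1(\te))$ is (trivially, being on the graph, in the basin, condition (b)); and the second orbit also gets close to $\phi_1(R^n\te)$ since it's in the basin of $\mathrm{graph}(\phi_1)$. Hence $\delta_n(\te)\to 0$ for a.e.\ $\te$. By the Poincar\'e recurrence theorem applied to $R$ and the set $E_\eps$ (which has positive measure), for a.e.\ $\te\in E_\eps$ there are infinitely many $n$ with $R^n\te\in E_\eps$, i.e.\ $d(R^n\te)\ge\eps$. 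The final step is to relate $\delta_n$ back to $d\circ R^n$: but $\delta_n(\te)$ is the distance between $\psi^{(n)}(\te,\phi_1(\te))$ and $\psi^{(n)}(\te,\phi_2(\te))$, whereas $d(R^n\te)$ is the distance between $\phi_1(R^n\te)$ and $\phi_2(R^n\te)$ — these agree only if $\psi^{(n)}(\te,\phi_i(\te))=\phi_i(R^n\te)$, i.e.\ if the graphs are invariant. So the honest argument must instead observe: $\psi^{(n)}(\te,\phi_1(\te))\to\phi_1(R^n\te)$ in the sense that the distance goes to $0$ as $n\to\infty$ (basin condition for $\phi_1$ applied at its own point — trivial if invariant, otherwise follows since the orbit is in the basin), and likewise $\psi^{(n)}(\te,\phi_2(\te))$ converges to $\mathrm{graph}(\phi_2)$, i.e.\ its distance to $\phi_2(R^n\te)$ goes to $0$. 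Combining with $\delta_n\to 0$ and the triangle inequality gives $d(R^n\te)\to 0$ for a.e.\ $\te$ — contradicting recurrence into $E_\eps$. Since $\eps>0$ was arbitrary, $d=0$ a.e., which is $\phi_1=\phi_2$ $\mu$-a.e.

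The main obstacle is exactly the bookkeeping around invariance: the statement deliberately does \emph{not} assume the attractors are invariant, so one cannot identify $\psi^{(n)}(\te,\phi_i(\te))$ with $\phi_i(R^n\te)$, and the convergence ``orbit of a point on $\mathrm{graph}(\phi_i)$ converges back to $\mathrm{graph}(\phi_i)$'' must be extracted from condition (b) of the definition of attractor together with the fibrewise meaning of distance. Once that is pinned down, everything else is a routine combination of the triangle inequality with Poincar\'e recurrence (or, alternatively, with Birkhoff's ergodic theorem applied to $\mathbf{1}_{E_\eps}$, using ergodicity of $\mu$ to conclude $\mu(E_\eps)=0$ or $1$, and ruling out $1$ by the a.e.\ convergence $d(R^n\te)\to 0$ along a subsequence). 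I would present the Poincar\'e recurrence version as it uses only invariance of $\mu$ for the recurrence step and ergodicity only to know that a positive-measure bad set is "seen," though in fact ergodicity is not strictly needed for this conclusion — but it is harmless to keep it as hypothesized.
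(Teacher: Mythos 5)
Your proof is correct and follows essentially the same route as the paper's: find a positive-measure set where the two functions are $\eps$-separated, use recurrence of orbits to that set, and apply the triangle inequality to an orbit that must converge fibrewise to both graphs. The paper streamlines this by tracking a single point $(\te,x)$ lying in both basins rather than the two points $(\te,\phi_1(\te))$ and $(\te,\phi_2(\te))$, which also sidesteps the bookkeeping you do around non-invariance and endpoints.
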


\begin{proof}
Suppose that $\phi_1$ and $\phi_2$ are not equal $\mu$-almost
everywhere. Then there exists $\eps>0$ and a measurable set $Z\subset
B$ of positive measure, such that $d(\phi_1(\te),\phi_2(\te))>\eps$
for every $\te\in Z$, where $d$ is the metric in $Y$. Since $\mu$ is
ergodic, almost every trajectory of $R$ visits $Z$ infinitely often.
Therefore, for almost every $\te\in B$ and every $x\in Y$ the maximum
of the distances of $\pi_2(F^n(\te,x))$ from $\phi_1(R^n(\te))$ and
from $\phi_2(R^n(\te))$ is larger than $\eps/2$ for infinitely many
$n$'s. This means that it is impossible for the graphs of both
$\phi_1$ and $\phi_2$ to be attractors.
\end{proof}

Now we present an example what can go wrong if we want to get an
attractor everywhere instead of almost everywhere.

\begin{example}\label{noinvattr}
Let $B=\{\te_n\}_{n=-\infty}^\infty\cup\{-1,1\}$, where
$\te_n=1-\frac1{n+1}$ if $n\ge 0$ and $\te_n=-1-\frac1n$ if $n<0$. The
map $R$ fixes $-1$ and $1$, and maps $\te_n$ to $\te_{n+1}$. We
define $\psi_\te(x)$ to be $x(2-x)$ if $\te\ge 0$ and
$x(2-x)/4$ if $\te<0$ (see Figure~\ref{noattr}). Assume that the graph
of a function $\phi:B\to[0,1]$ is an invariant attractor. Since
$x(2-x)/4\le x/2$, we have $\phi(\te_0)\le \phi(\te_{-n})/2^n \le
1/2^n$ for all positive $n$. Thus, $\phi(\te_0)=0$, and consequently,
$\phi(\te_n)=0$ for all $n>0$. On the other hand, the trajectory of
every $x\in (0,1]$ under the map $x\mapsto x(2-x)$ goes to 1, so
$\phi(\te_n)\to 1$ as $n\to\infty$. This is a contradiction, and
therefore in this case there is no $\phi$ whose graph is an invariant
attractor.
\end{example}

\begin{figure}
\begin{center}
\includegraphics[height=50truemm]{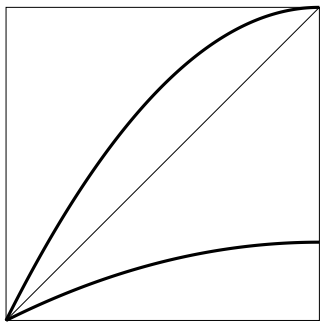}
\caption{Maps $x\mapsto x(2-x)$ and $x\mapsto x(2-x)/4$ on
  $[0,1]$.}\label{noattr}
\end{center}
\end{figure}

\section{Noninvertible base map}\label{sec-noninv}

In this section we consider a model which does not satisfy our
assumption from the preceding section (in particular, the fiber space
consists of only two points), but illustrates very well the problems
that we can encounter when considering a noninvertible map in the
base. It has a great advantage of being simple, and can be interpreted
as flipping a coin. It leads us to the mystery of the vanishing
attractor.

In the base $B$ we take the full one-sided shift $R$ on 2 symbols (0
and 1). The fiber space consists of two points (again 0 and 1). We
will use the notation $x=(x_0,x_1,x_2,\dots)\in B$ with
$x_i\in\{0,1\}$, so $R(x)=(x_1,x_2,x_3,\dots)$. The map
$F:B\times\{0,1\}\to B\times\{0,1\}$ is given by
\[
F(x,y)=(R(x),x_0).
\]

In this setup, the graph of a function $\phi:B\to\{0,1\}$ is an
attractor if and only if for every $(x,y)\in B\times\{0,1\}$ there
is $N$ such that for every $n\ge N$
\begin{equation}\label{m1}
\pi_2(F^n(x,y))=\phi(R^n(x)).
\end{equation}

We will show first that such a function exists.

\begin{definition}
Consider the following equivalence relation in $B$: the points $\te$
and $\sigma$ are equivalent if $R^n(\te)=R^m(\sigma)$ for some
nonnegative integers $n,m$. The equivalence classes of this relation
will be called the \emph{full orbits} of $R$.
\end{definition}

\begin{theorem}\label{nbm}
For the system defined above, there exists a function
$\phi:B\to\{0,1\}$ whose graph is an attractor with the basin of
attraction equal to the whole space.
\end{theorem}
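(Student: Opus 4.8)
The plan is to build $\phi$ by specifying its value on each full orbit of $R$ separately, since the attractor condition~\eqref{m1} only ever compares points within a single full orbit. Fix a point $x\in B$ and consider the forward orbit $x, R(x), R^2(x),\dots$ together with the symbols $x_0, x_1, x_2,\dots$ that get ``copied into'' the fiber by $F$. The key observation is that $\pi_2(F^n(x,y)) = x_{n-1}$ for all $n\ge 1$, independently of $y$ — one iterate already forgets the initial fiber coordinate. So for the orbit of a single $x$, the natural candidate is to set $\phi(R^n(x)) = x_{n-1}$ for $n\ge 1$; this makes~\eqref{m1} hold for that $x$ with $N=1$, regardless of $y$.

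First I would check that this is consistent: if $R^n(x) = R^m(x')$ for points $x,x'$ in the same full orbit, do the prescriptions agree? Both sides equal the same tail of symbols shifted appropriately, so $x_{n-1+k}$ and $x'_{m-1+k}$ coincide for all $k\ge 0$ because $R^{n}(x)=R^{m}(x')$ forces $x_{n+j}=x'_{m+j}$ for all $j\ge 0$; hence $\phi$ is well defined on the part of the full orbit that lies in the forward orbit of some (equivalently, every) element. The only points of a full orbit possibly not covered this way are those $\te$ with no preimage structure forcing a value — but in a full orbit there is always a ``backward-most'' portion; for any $\te$ in the class, either $\te = R^n(x)$ for our chosen representative $x$ with $n\ge 1$ (value assigned), or $\te$ lies ``before'' $x$, in which case I just pick $x$ itself (or an even earlier point) as representative so that every element of the class is $R^n(\cdot)$ of it for some $n\ge 1$; since each full orbit, viewed forward, is countable, one can choose a representative whose forward orbit exhausts the class, or else assign $\phi(\te)$ arbitrarily on the finitely-or-countably-many ``initial'' points not reached — these don't affect~\eqref{m1} because~\eqref{m1} only constrains $\phi$ at points $R^n(x)$ with $n\ge 1$.

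Then I would verify the attractor conditions from the definition. Condition (i) is trivial: fibers are $\{0,1\}$, and the intersection with a fiber is a single point, hence compact. For conditions (ii) and (iii): given any $(x,y)$, taking $N=1$ we have $\pi_2(F^n(x,y)) = x_{n-1} = \phi(R^n(x))$ for all $n\ge N$ by construction, so $(x,y)$ is in the basin; thus the basin is the whole space $B\times\{0,1\}$, which certainly contains the graph of $\phi$ and meets every fiber in a neighborhood of (in fact all of) the two-point fiber. So the graph of $\phi$ is an attractor with full basin.

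The only delicate point — and the one I'd expect to need the most care in writing up — is the bookkeeping for choosing representatives of full orbits so that $\phi$ is genuinely well defined as a function $B\to\{0,1\}$ and the value at every point actually used in~\eqref{m1} is the ``copied symbol'' value; but this is a routine gluing argument over the (pairwise disjoint) full orbits using the axiom of choice to pick one representative per class, and the compatibility check above shows no conflict arises. No measurability or continuity of $\phi$ is claimed here, which is precisely the point of the ``mystery'' discussed after the statement.
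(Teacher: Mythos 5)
Your construction is the same as the paper's: decompose $B$ into full orbits, pick one representative $x$ per orbit, prescribe $\phi(R^n(x))=x_{n-1}$ for $n\ge 1$, assign arbitrary values elsewhere, and observe that every point of the orbit eventually enters the forward orbit of $x$ carrying the ``copied symbol.'' However, your explicit consistency check is wrong at exactly the index that matters. From $R^n(x)=R^m(x')$ you correctly get $x_{n+j}=x'_{m+j}$ for all $j\ge 0$, i.e.\ $x_{n-1+k}=x'_{m-1+k}$ for $k\ge 1$ --- but the prescriptions you are comparing are $x_{n-1}$ and $x'_{m-1}$, which is the case $k=0$, and the shift gives no information about the discarded symbol. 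Concretely, take the representative $x=(0,1,1,1,\dots)$ of the full orbit of $\bar 1=(1,1,1,\dots)$: then $R^1(x)=R^2(x)=\bar 1$, and your recipe prescribes both $\phi(\bar 1)=x_0=0$ and $\phi(\bar 1)=x_1=1$. So as stated, $\phi$ is not well defined on eventually periodic orbits, and since the attractor condition~\eqref{m1} forces $\phi(\bar 1)=1$ here, a bad choice of representative actually destroys the attractor property, not just the bookkeeping.

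The gap is real but local and easily repaired: a conflict $R^n(x)=R^{n'}(x)$ with $n\ne n'$ occurs only when the representative $x$ is eventually periodic, and if in that case you choose the representative to be a genuinely periodic point of period $p$, then $p\mid n'-n$ and $x_{n-1}=x_{n'-1}$, so the prescription is consistent (and it is automatically consistent on non-eventually-periodic orbits, where $n\mapsto R^n(x)$ is injective). This is precisely the device the paper uses later in the proof of Theorem~\ref{nostructpi}. For what it is worth, the paper's own proof of Theorem~\ref{nbm} is silent on this well-definedness issue; your write-up differs only in that you claim to have verified it with an argument that does not hold. Everything else --- the reduction of the attractor conditions to~\eqref{m1}, the observation that one iterate forgets the fiber coordinate, and the verification that the basin is all of $B\times\{0,1\}$ --- matches the paper.
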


\begin{proof}
We can look at $B$ as the disjoint union of full orbits of $R$. For
each such orbit $\orb$, we choose one element $x=(x_0,x_1,x_2,\dots)
\in\orb$. Then for each $n\ge 1$ we set $\phi(R^n(x))=x_{n-1}$.
Observe that in such a way for every $y\in\{0,1\}$ we have
$F(R^{n-1}(x),y)=(R^n(x),\phi(R^n(x))$. Now, for every $z\in\orb$, if
$m$ is sufficiently large, there exists $n>0$ such that
$R^{m-1}(z)=R^{n-1}(x)$, so for every $y\in\{0,1\}$ we have
$\pi_2(F(R^m(z),y)=\phi(R^m(z))$. Thus, if we define $\phi$ in an
arbitrary way at the remaining points of $\orb$, and make this
construction for all full orbits of $R$, then the graph of $\phi$ will
be a global attractor.
\end{proof}

If $\mu$ is an ergodic invariant measure for $R$, then the question is
whether a function $\phi$, whose graph is an attractor, can be
measurable.

\begin{theorem}\label{model}
For the system described above and an ergodic invariant probability
measure $\mu$ on $B$, if there exists a $\mu$-measurable function
$\phi:B\to\{0,1\}$ whose graph is an attractor with the basin of
attraction $Z\times \{0,1\}$ for some set $Z\subset B$ of $\mu$
measure $1$, then the entropy of $\mu$ is zero.
\end{theorem}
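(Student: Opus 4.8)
The plan is to show that if $\phi$ is $\mu$-measurable and its graph is an attractor, then $\phi(R^n(x))$ must be determined by the ``remote past'' of the trajectory of $x$, which forces $\mu$ to have zero entropy by the Kolmogorov--Sinai theory (the tail $\sigma$-algebra of a positive-entropy one-sided Bernoulli or $K$-type factor is nontrivial, but here the trajectory of $x$ under $R$ only has a well-defined \emph{future}, so the information must come from the symbol $x_0$ that gets shifted away). Concretely, recall that $\pi_2(F^n(x,y)) = x_{n-1}$ for all $n \ge 1$, regardless of $y$. So the condition that the graph of $\phi$ is an attractor says: for $\mu$-a.e.\ $x$ and every $y$, there is $N$ with $x_{n-1} = \phi(R^n(x))$ for all $n \ge N$. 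In particular, for $\mu$-a.e.\ $x$ and all large $n$,
\begin{equation}\label{eq:key}
\phi(R^n(x)) = x_{n-1} = (\text{the symbol dropped in passing from } R^{n-1}(x) \text{ to } R^n(x)).
\end{equation}

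Next I would reformulate \eqref{eq:key} as a statement about the value $\phi(\te)$ in terms of preimages of $\te$. Write $\te = R^n(x)$; then $x \in R^{-n}(\te)$ and $x_{n-1}$ is the $(n-1)$-st symbol of that chosen preimage. Since $R$ is the one-sided shift, a preimage $R^{-1}(\te)$ consists of the two points $0\te$ and $1\te$ obtained by prepending a symbol, and $x_{n-1}$ is exactly the symbol prepended at the last (innermost) step of building $x$ from $\te$ by $n$ applications of ``prepend a bit.'' Thus \eqref{eq:key} says: for $\mu$-a.e.\ $x$, if we look at the point $\te_m := R^m(x)$ for large $m$, then $\phi(\te_m)$ equals the bit that, when prepended to $\te_{m}$, gives $R^{m-1}(x)$. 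Equivalently, $\phi$ composed with $R$ ``reads off'' the next symbol of the past: for a.e.\ $x$ and large $m$, $\phi(R^m(x))$ is the first coordinate of $R^{m-1}(x)$. This means the function $x \mapsto x_0$ agrees, $\mu$-a.e.\ on the (full-measure, $R$-invariant up to null sets) set where the attractor condition kicks in, with $\phi \circ R^{?}$ evaluated appropriately — more precisely, $x_0 = \phi(R(x))$ for $\mu$-a.e.\ $x$ once we are far enough along the orbit, and by invariance of $\mu$ this forces $x_0 = \phi(R(x))$ for $\mu$-a.e.\ $x$ outright.

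Granting $x_0 = \phi(R(x))$ $\mu$-a.e., I would finish by a standard entropy computation. This identity says the zeroth coordinate is a measurable function of $R(x)$, hence (iterating) $x_0, x_1, \dots$ are all measurable functions of $R^k(x)$ for every $k$; that is, the partition $\mathcal{P}$ into cylinders $[x_0 = 0]$, $[x_0 = 1]$ satisfies $\mathcal{P} \subseteq \bigcap_{k \ge 1} R^{-k}\mathcal{B}$ up to $\mu$-null sets, where $\mathcal{B}$ is the Borel $\sigma$-algebra. Since $\mathcal{P}$ generates $\mathcal{B}$ under $R$, it follows that the whole $\sigma$-algebra is contained in the tail $\bigcap_k R^{-k}\mathcal{B}$, so the natural extension of $(R,\mu)$ is its own tail and the Pinsker $\sigma$-algebra is everything, i.e.\ $h_\mu(R) = 0$. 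Alternatively and more elementarily: $h_\mu(R) = h_\mu(R,\mathcal{P}) = \lim_n \frac1n H_\mu\bigl(\bigvee_{i=0}^{n-1} R^{-i}\mathcal{P}\bigr)$, and since each $R^{-i}\mathcal{P}$ for $i \ge 1$ is ($\mu$-a.e.) refined by $R^{-1}\mathcal{B} \supseteq \mathcal{P}$ pulled forward — one shows $H_\mu\bigl(\bigvee_{i=0}^{n-1} R^{-i}\mathcal{P}\bigr) = H_\mu(\mathcal{P})$ is bounded, so the limit is $0$.

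The main obstacle I anticipate is the passage from the ``eventual'' identity \eqref{eq:key} (which holds only for $n \ge N(x)$, with $N$ depending measurably on $x$ but not uniformly bounded) to the clean pointwise identity $x_0 = \phi(R(x))$ valid $\mu$-a.e.\ with no qualifier. This is exactly where ergodicity and invariance of $\mu$ must be used: one partitions $B$ by the value of $N(x)$, notes that on a positive-measure piece $N$ is bounded, and then pushes forward by $R$ to spread the identity over a full-measure set; care is needed because $R$ is noninvertible, so ``pushing forward'' a set that carries the identity requires checking that the identity $x_0 = \phi(R(x))$ is itself $R$-equivariant in the right sense. Once that bookkeeping is done, the entropy conclusion is routine.
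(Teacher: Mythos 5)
The proposal is correct and follows essentially the same route as the paper: rewrite the attractor condition as ``$R^{n-1}(x)$ eventually lies in $A=\{x\colon x_0=\phi(R(x))\}$'', use invariance/ergodicity of $\mu$ to upgrade this to $\mu(A)=1$, and conclude zero entropy from the fact that the generating partition $\mathcal{P}=\{[x_0=0],[x_0=1]\}$ is contained mod $\mu$ in $R^{-1}\mathcal{B}$ (equivalently, $R$ is one-to-one $\mu$-a.e.), which together with $\mathcal{P}$ being a one-sided generator gives $h_\mu(R)=H_\mu(\mathcal{P}\mid R^{-1}\mathcal{B})=0$, exactly the fact the paper cites from Parry. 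One caveat: your ``more elementary'' alternative is wrong as literally stated, since $H_\mu\bigl(\bigvee_{i=0}^{n-1}R^{-i}\mathcal{P}\bigr)$ need not equal $H_\mu(\mathcal{P})$ (the identity $x_0=\phi(R(x))$ makes $x_0$ a function of the \emph{entire} future $x_1,x_2,\dots$, not of $x_1,\dots,x_{n-1}$ alone), but this side remark does not affect the main argument.
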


\begin{proof}
Set $A=\{x\in B:x_0=\phi(R(x))\}$ and $C=\bigcap_{n=0}^\infty
R^{-n}(A)$. With this notation, if $n\ge 1$ then
\[
\pi_2(F^n(x,y))=(R^{n-1}(x))_0,
\]
so~\eqref{m1} is
equivalent to $R^{n-1}(x)\in A$. Thus, the graph of $\phi$ is an
attractor on a set of full measure if and only if for almost every
$x\in B$ there is $N\ge 1$ such that $R^N(x)\in
C$, that is, if
\[
\mu\left(\bigcup_{N=1}^\infty R^{-N}(C)\right)=1.
\]
Note also that $\sigma(C)\subset C$.

Clearly, if $\phi$ is measurable then $A$ is measurable, so $C$ is
also measurable. Since $C$ is invariant and $\mu$ is ergodic, $C$ has
measure 0 or 1. If it has measure 0, the union of its preimages has
measure 0, a contradiction. This proves that $\mu(C)=1$. Since
$C\subset A$, we get $\mu(A)=1$.

Let $\xi:B\to B$ be the map that replaces $x_0$ by $1-x_0$. By the
definition, at most one of the points $x,\xi(x)$ can belong to $A$.
Therefore, $A\cap\xi(A)=\emptyset$. This means that the shift $R$ is
one-to-one $\mu$-almost everywhere. Since $R$ has a one-sided
generator, this implies that $h_\mu(R)=0$ (see, e.g., \cite{Parry}).
\end{proof}

Since there exist ergodic invariant probability measures for $R$ with
positive entropy, we get the following corollary.

\begin{corollary}\label{noBorel}
For the system described above, there is no Borel function
$\phi:B\to\{0,1\}$ whose graph is an attractor.
\end{corollary}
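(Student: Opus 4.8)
The plan is to deduce this directly from Theorem~\ref{model}, arguing by contradiction. Suppose some Borel function $\phi:B\to\{0,1\}$ had a graph that is an attractor. A Borel function is automatically $\mu$-measurable for any Borel probability measure $\mu$ on $B$. Moreover, by the characterization of attractors recorded just before Theorem~\ref{nbm}, the graph of $\phi$ being an attractor means precisely that for every $(x,y)\in B\times\{0,1\}$ there is $N$ with $\pi_2(F^n(x,y))=\phi(R^n(x))$ for all $n\ge N$; in other words its basin of attraction is all of $B\times\{0,1\}$, which in particular has the form $Z\times\{0,1\}$ with $\mu(Z)=1$. Hence the hypotheses of Theorem~\ref{model} are met, and we conclude $h_\mu(R)=0$.

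It then remains only to exhibit a single ergodic invariant probability measure $\mu$ for the one-sided full $2$-shift $R$ with $h_\mu(R)>0$. For this I would take the $(1/2,1/2)$-Bernoulli measure $\mu$ on $B=\{0,1\}^{\N}$: it is $R$-invariant, it is ergodic (indeed mixing), and the standard formula for the entropy of a Bernoulli shift gives $h_\mu(R)=\log 2>0$. Combined with the previous paragraph this forces $\log 2=0$, a contradiction, so no such Borel $\phi$ exists.

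There is essentially no hard step here: the corollary is a bookkeeping consequence of Theorem~\ref{model}. The only points requiring a moment of care are (i) observing that ``Borel'' yields ``$\mu$-measurable'', so that Theorem~\ref{model} applies verbatim, and (ii) noting that in this two-point-fiber model an attractor has, by definition, basin equal to the whole space, so the set $Z$ in the statement of Theorem~\ref{model} may be taken to be all of $B$ with $\mu(Z)=1$. Both are immediate, and the existence of a positive-entropy ergodic measure for the full shift is classical, so I expect no genuine obstacle in carrying out this plan.
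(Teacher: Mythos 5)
Your argument is correct and is exactly the paper's intended deduction: the paper's entire justification is the remark that positive-entropy ergodic measures exist for the full shift, and you have simply made explicit the same contradiction with Theorem~\ref{model} using the $(1/2,1/2)$-Bernoulli measure. The two auxiliary observations you flag (Borel implies $\mu$-measurable, and the basin of an attractor in this model is all of $B\times\{0,1\}$ so $Z=B$ works) are both right and complete the bookkeeping.
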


Consider now what happens if instead of one-sided shift in the base we
consider its natural extension, the two-sided shift. In this case the
graph of the map $\psi$, given by
\[
\psi(\dots,x_{-2},x_{-1},x_0,x_1,x_2,\dots)=x_{-1}
\]
is clearly an attractor with the whole space as the basin of
attraction (because this graph is the image of the whole space). This
attractor is invariant, the function $\psi$ is continuous, and
therefore measurable for all invariant measures. To summarize -- this
is the best attractor one can dream of. Yet when we return to our
original system (which can be interpreted as forgetting about the
past), this attractor vanishes. This is a paradox, because attractors
are defined by looking forward in time, so why should forgetting of
the past have any influence on them? We call this strange phenomenon
``the mystery of the vanishing attractor.''

\section{$\alpha$-concavity}\label{sec-ac}

In this section we introduce the notion of $\alpha$-concavity and
we obtain estimates that relate the $\alpha$-concavity with the
contraction of an interval map, both in the monotone and nonmonotone
case. These estimates will be useful later.

\begin{definition}
Let $f$ be a continuous real-valued function on a closed interval $I$
of the real line and let $\alpha\ge 0$. The function $f$ will be
called \emph{$\alpha$-concave} if the function $\fa$, given by
\[
\fa(x)=f(x)+\alpha x^2
\]
is concave.
\end{definition}

The following properties of an $\alpha$-concave function $f$ follow
immediately from the definition:
\begin{enumerate}
\item $f$ is concave;
\item if $\alpha>0$ then $f$ is strictly concave;
\item if $0\le\beta\le\alpha$ then $f$ is $\beta$-concave.
\end{enumerate}

Now we will prove two inequalities satisfied by $\alpha$-concave
functions that we will need later.

Assume that the left endpoint of $I$ is $0$. Given two points $u,v >
0$ we define
\[
 \kappa(u,v) := \frac{|v-u|}{\min\{u,v\}}.
\]

\begin{lemma}\label{fratio-inc}
Assume that $f$ is $\alpha$-concave in the interval $[0,y]$ and
$f(0) = 0 < f(y)$. Let $x \in (0,y)$ be such that
$0 < f(x) < f(y).$ Then,
\[
 \frac{\kappa(f(x) , f(y))}{\kappa(x, y)} \le
 \frac{f(y)}{f(y)+\alpha y^2}.
\]
\end{lemma}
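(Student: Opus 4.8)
The plan is to translate everything into statements about $\fa(x) = f(x) + \alpha x^2$, which is concave and satisfies $\fa(0) = 0$. Write $\kappa(u,v) = |v-u|/\min\{u,v\}$, and note that since $0 < f(x) < f(y)$ we have $\min\{f(x),f(y)\} = f(x)$ and $|f(y)-f(x)| = f(y)-f(x)$, so $\kappa(f(x),f(y)) = f(y)/f(x) - 1$; similarly $\kappa(x,y) = y/x - 1$ because $0 < x < y$. Thus the quantity to bound is
\[
\frac{\kappa(f(x),f(y))}{\kappa(x,y)} = \frac{(f(y)-f(x))/f(x)}{(y-x)/x} = \frac{x}{y}\cdot\frac{y-x}{\;}\cdot\frac{1}{f(x)}\cdot\frac{f(y)-f(x)}{y-x},
\]
so everything reduces to comparing the slope of the chord of $f$ from $x$ to $y$ with the ``average slope'' $f(x)/x$ of $f$ from $0$ to $x$.

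First I would exploit concavity of $\fa$ through $0$: for a concave function vanishing at $0$, the slope-from-the-origin function $t \mapsto \fa(t)/t$ is nonincreasing on $(0,y]$, and more generally chord slopes decrease as we move to the right. Concretely, concavity of $\fa$ on $[0,y]$ gives, for $0 < x < y$,
\[
\frac{\fa(y)-\fa(x)}{y-x} \le \frac{\fa(x)-\fa(0)}{x-0} = \frac{\fa(x)}{x}.
\]
Now substitute $\fa(t) = f(t) + \alpha t^2$. The left side becomes $\frac{f(y)-f(x)}{y-x} + \alpha(y+x)$ and the right side becomes $\frac{f(x)}{x} + \alpha x$, so after cancelling an $\alpha x$ the inequality rearranges to $\frac{f(y)-f(x)}{y-x} \le \frac{f(x)}{x} - \alpha y$. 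This is the key estimate: it bounds the chord slope of $f$ on $[x,y]$ in terms of $f(x)/x$ with an explicit concavity gain $-\alpha y$.

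Plugging this into the displayed expression for the ratio, I get
\[
\frac{\kappa(f(x),f(y))}{\kappa(x,y)} \le \frac{x(y-x)}{y\,f(x)}\left(\frac{f(x)}{x} - \alpha y\right) = \frac{y-x}{y} - \frac{\alpha x(y-x)}{f(x)}.
\]
To finish I want to show the right-hand side is at most $f(y)/(f(y)+\alpha y^2)$. The cleanest route is to bound $f(x)$ from below using concavity through $0$ once more — namely $f(x)/x \ge f(y)/y + \alpha(y - x) \ge f(y)/y$ — wait, more carefully: from $\fa(x)/x \ge \fa(y)/y$ we get $f(x)/x \ge f(y)/y + \alpha(y-x)$, hence $f(x) \ge x f(y)/y$. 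I suspect the intended argument instead keeps the estimate tight by writing everything over the common denominator $f(y) + \alpha y^2$ and checking the resulting polynomial inequality in $x, y$ directly; the target RHS rewritten is $1 - \alpha y^2/(f(y)+\alpha y^2)$, so I must show $\frac{x}{y} + \frac{\alpha x(y-x)}{f(x)} \ge \frac{\alpha y^2}{f(y)+\alpha y^2}$.

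The main obstacle I anticipate is exactly this last step: choosing which lower bound on $f(x)$ (equivalently, which chord-slope inequality from concavity of $\fa$) is strong enough to close the gap without overshooting. There is some tension because the claimed bound is an equality when $f(y) = f(y) + \alpha y^2$ forced a certain linear profile, so the concavity estimate has to be used at essentially the extremal configuration, and a lossy bound like $f(x) \ge xf(y)/y$ may not suffice — I may instead need the sharper $\fa(x) \ge \frac{x}{y}\fa(y)$, i.e. $f(x) \ge \frac{x}{y}(f(y)+\alpha y^2) - \alpha x^2 = \frac{x}{y}f(y) + \alpha x(y-x)$, and then substitute this into the denominators. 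Once the right lower bound on $f(x)$ is in hand, the remaining verification should be a routine (if slightly tedious) algebraic manipulation that reduces to a true inequality like $(y-x)(\text{something nonnegative}) \ge 0$, so I would not grind through it here.
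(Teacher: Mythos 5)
Your overall strategy (reduce everything to chord--slope comparisons for the concave function $f_\alpha$ vanishing at $0$) is the right one, and you even name the inequality the paper actually uses, $f_\alpha(x)/x\ge f_\alpha(y)/y$. But the route you execute has a genuine gap, for two reasons. First, an algebra slip: since $\kappa(f(x),f(y))/\kappa(x,y)=\frac{f(y)-f(x)}{f(x)}\cdot\frac{x}{y-x}$, the correct prefactor in front of the chord slope $\frac{f(y)-f(x)}{y-x}$ is $\frac{x}{f(x)}$, not $\frac{x(y-x)}{y\,f(x)}$; your displayed bound $\frac{y-x}{y}-\frac{\alpha x(y-x)}{f(x)}$ carries a spurious factor $\frac{y-x}{y}$ (the garbled display with the empty denominator is where this enters). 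Second, and more seriously, with the prefactor corrected your key estimate $\frac{f(y)-f(x)}{y-x}\le\frac{f(x)}{x}-\alpha y$ only yields
\[
\frac{\kappa(f(x),f(y))}{\kappa(x,y)}\le 1-\frac{\alpha xy}{f(x)},
\]
and to get from here to $1-\frac{\alpha y^2}{f(y)+\alpha y^2}$ you would need the \emph{upper} bound $f(x)\le\frac{x}{y}\bigl(f(y)+\alpha y^2\bigr)$. That is false in general: take $\alpha=1$, $f(t)=\sqrt{t}-t^2$ on $[0,0.04]$, $x=0.01$, $y=0.04$; all hypotheses hold, yet $f(x)=0.0999$ while $\frac{x}{y}(f(y)+\alpha y^2)=0.05$, and indeed $1-\frac{\alpha xy}{f(x)}\approx 0.996$ exceeds the target $\frac{f(y)}{f(y)+\alpha y^2}=0.992$. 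So the lower bounds on $f(x)$ you propose to ``substitute into the denominators'' point in the wrong direction: $f(x)$ sits in the denominator of a \emph{negative} term, so lower bounds make your estimate worse, not better, and this route cannot close.

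The fix, which is what the paper does, is to run everything off the single inequality $f_\alpha(x)/x\ge f_\alpha(y)/y$, i.e.\ $\frac{f(x)}{x}\ge\frac{f(y)}{y}+\alpha(y-x)$. This gives a lower bound on the denominator, $f(x)\ge x\bigl(\tfrac{f(y)}{y}+\alpha(y-x)\bigr)$, and, rearranged, an upper bound on the numerator expressed in terms of $f(y)/y$ rather than $f(x)/x$, namely $f(y)-f(x)\le(y-x)\bigl(\tfrac{f(y)}{y}-\alpha x\bigr)$. The ratio is then at most $\bigl(\tfrac{f(y)}{y}-\alpha x\bigr)/\bigl(\tfrac{f(y)}{y}+\alpha(y-x)\bigr)=1-\alpha y/\bigl(\tfrac{f(y)}{y}+\alpha(y-x)\bigr)\le 1-\alpha y/\bigl(\tfrac{f(y)}{y}+\alpha y\bigr)=\frac{f(y)}{f(y)+\alpha y^2}$. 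The idea you are missing is that the numerator must also be rewritten in terms of $f(y)/y$: once numerator and denominator are both controlled by the same quantity, the unknown $f(x)$ cancels and no upper bound on it is needed.
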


\begin{proof}
By concavity of $\fa$ we have $\fa(x)/x\ge\fa(y)/y$. Therefore
\begin{equation}\label{eq-fi}
\frac{f(x)}x\ge\frac{f(y)}y+\alpha(y-x),
\end{equation}
so
\[
f(x)\ge\frac{xf(y)}y+\alpha x(y-x).
\]
Thus,
\[
f(y)-f(x)\le\frac{yf(y)}y-\frac{xf(y)}y-\alpha x(y-x)=
(y-x)\left(\frac{f(y)}y-\alpha x\right).
\]
{}From this and~\eqref{eq-fi} we get
\begin{align*}
\frac{\kappa(f(x),f(y))}{\kappa(x,y)}
&= \frac{f(y) - f(x)}{f(x)}\cdot \frac{x}{y-x}
\le \frac{\tfrac{f(y)}{y}-\alpha x}{\tfrac{f(y)}{y}+\alpha(y-x)}\\
&= 1 - \frac{\alpha y}{\tfrac{f(y)}{y} + \alpha(y-x)}
\le 1 - \frac{\alpha y}{\tfrac{f(y)}{y} + \alpha y}
= \frac{\tfrac{f(y)}{y}}{\tfrac{f(y)}{y} + \alpha y}
= \frac{f(y)}{f(y) + \alpha y^2}.
\end{align*}
\end{proof}

Recall that for a concave map the one-sided derivatives are well
defined. We will denote the left one-sided derivative of $f$ by
$f'_-.$

Let $f$ be a strictly concave nonnegative function on the interval
$[0,a]$, with $f(0)=0$. Observe that there exists a unique point
$c\in[0,a]$ such that $f(c) = \max\{f(x) \colon x \in [0,a]\}$, that
is, $f$ is strictly increasing on $[0,c]$ and strictly decreasing on
$[c,a]$ (but note that it may happen that $c=a$).

By strict concavity, for every $x \in (0,c]$ we have $f'_-(x) <
f(x)/x$. Usually this inequality, with the absolute value of the
derivative, can be extended further to the right of $c$. Set
\begin{equation}\label{definb}
b=\sup\left\{x\in [0,a]\colon |f'_-(x)|<\frac{f(x)}{x}\right\}.
\end{equation}
Since often the absolute values of the slopes of the tangent line to
the graph of $f$ at $b$ and the line joining $(0,0)$ with $(b,f(b))$
are equal, we will call $b$ the \emph{isoclinic point} of $f$.

Note that $c\le b \le a$ and $f(b) > 0$. However, we can prove more.

\begin{lemma}\label{isoclinic}
Let $f$ be a strictly concave nonnegative map of an interval $[0,a]$
to itself, with $f(0)=0$, whose isoclinic point is $b$. Then $b\ge
a/2$.
\end{lemma}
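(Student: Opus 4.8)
The plan is to argue by contradiction: suppose $b < a/2$. By the definition~\eqref{definb} of $b$ as a supremum, for every $x \in (b,a]$ we have $|f'_-(x)| \ge f(x)/x$; in particular, since $b \ge c$, the function $f$ is decreasing on $[b,a]$, so on this interval $f'_-(x) \le 0$ and the inequality reads $-f'_-(x) \ge f(x)/x > 0$, i.e.\ $f$ is strictly decreasing there and its slope is at least as steep as the slope of the chord from the origin. The idea is to integrate (or rather, use the mean value inequality for one-sided derivatives of concave functions) this slope bound on the interval $[b, 2b]$, which is legitimate precisely because $2b < a$, and derive that $f(2b) \le 0$, contradicting $f(2b) > 0$ (which holds since $2b < a$ and $f$ is positive on $(0,a)$ — this positivity follows from strict concavity together with $f(0) = 0$ and $f$ mapping into $[0,a]$, hence $f \ge 0$, and $f$ cannot vanish at an interior point without being identically zero on a subinterval).

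Here are the steps in order. First, I record that $f > 0$ on $(0,a)$: strict concavity plus $f(0)=0$ and $f \ge 0$ forces this. Second, I note $c \le b$ (stated in the excerpt) and set up the contradiction hypothesis $b < a/2$, so that $2b \in (b,a)$ and $f$ is strictly decreasing on $[b,2b]$. Third — the key estimate — for $x \in (b,a]$ I have $-f'_-(x) \ge f(x)/x \ge f(x)/a$; more usefully, I want a bound that lets me compare $f(2b)$ with $f(b)$. Since $f$ is concave on $[b,2b]$, for any $x$ in this interval $f(x) \le f(b) + f'_-(b^+)\cdot (x - b)$ is the wrong direction; instead I use that the slope of the chord from $(x, f(x))$ onward is controlled. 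Cleanly: for $x \in (b, 2b)$, concavity gives $f'_-(x) \le \frac{f(x) - f(b)}{x - b}$, but I also have $f'_-(x) \le -f(x)/x$. Combining, at $x$ just above $b$ and using continuity, $-f(b)/b \le f'_+(b)$... Let me instead integrate the differential inequality $f'_-(x) \le -f(x)/x$ on $[b+\eps, 2b]$: this yields $\log f(2b) - \log f(b+\eps) \le -\int_{b+\eps}^{2b} \frac{dx}{x} = \log\frac{b+\eps}{2b}$, hence $f(2b) \le \frac{b+\eps}{2b} f(b+\eps)$; letting $\eps \to 0^+$ and using continuity, $f(2b) \le \frac12 f(b)$. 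This is not yet a contradiction — $f(2b)$ is still positive. I must iterate: the same argument applied on $[b, 2^k b \wedge a]$ shows $f$ decays geometrically, and since $2^k b \ge a$ for large $k$, I get $f(a^-) \le 2^{-k'} f(b) \to$ a value forcing $f$ to reach $0$ strictly before $a$ if $b < a/2$, actually the cleanest contradiction: run the estimate on $[b, a]$ directly, obtaining $f(a) \le \frac{b}{a} f(b)$, which is fine, so I really do need the doubling: on $[b,a]$ with $a > 2b$ I get from $-f'_-(x) \ge f(x)/x$ that $f(a) \le f(b)\cdot b/a < f(b)/2$; still positive. The genuine contradiction comes from observing that the differential inequality $f'_-(x) \le -f(x)/x$ cannot hold on an interval reaching from $b$ to $a$ while keeping $f$ positive at $a$ AND $f$ concave with $f(0)=0$ — because concavity from the origin forces $f(x) \ge (f(b)/b)x$ for $x \le b$ but for $x > b$ concavity forces $f(x) \le (f(b)/b)x$, and pairing $f(a) \le (b/a)f(b)$ with the concavity chord bound $f(a) \ge$ (something) when $a < $ the zero of the tangent line at $b$...

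The main obstacle, and where I would spend the real effort, is pinning down exactly which inequality yields the contradiction: the one-sided derivative bound on $(b,a]$ integrates to a geometric-type decay, but a single pass only gives $f(a) \le (b/a)f(b)$, which is consistent with $f(a) > 0$. The resolution I expect to work is to note that $|f'_-(x)| \ge f(x)/x$ on all of $(b,a]$ means the slope is everywhere steeper than the chord-to-origin slope, so the tangent line at any $x \in (b,a)$ hits zero at an abscissa $\le x + f(x)/|f'_-(x)| \le 2x$; taking $x \downarrow b$ this says $f$ must vanish by $2b < a$, contradicting $f > 0$ on $(0,a)$. Making "the tangent line lies above the graph to the right, and hits zero by $2x$, so $f(2x) \le 0$" rigorous for one-sided derivatives of concave functions is the technical heart, but it is standard (the graph lies below every supporting line). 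So the structure is: (i) $f>0$ on $(0,a)$; (ii) assume $b<a/2$; (iii) pick $x\in(b,2b)$, use that the supporting line at $x$ has slope $f'_-(x)$ with $|f'_-(x)|\ge f(x)/x$, so this line reaches height $0$ at abscissa $\le x + f(x)/|f'_-(x)| \le 2x < a$; (iv) the graph lies below this line, so $f$ vanishes somewhere in $(x, 2x] \subset (0,a)$, contradicting (i). Hence $b \ge a/2$.
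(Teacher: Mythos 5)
Your final argument --- the (i)--(iv) structure at the very end --- is correct, though most of the proposal consists of dead ends that you yourself discard: the integrated differential inequality only yields $f(2b)\le f(b)/2$ and $f(a)\le (b/a)f(b)$, which contradict nothing. What does work is the supporting-line version: for $x>b$ the definition of $b$ as a supremum gives $|f'_-(x)|\ge f(x)/x$, and $f'_-(x)<0$ since $x>b\ge c$ and $f(x)>0$; the line through $(x,f(x))$ with slope $f'_-(x)$ lies above the graph of the concave $f$ and reaches height $0$ no later than $2x$, so $f(2x)\le 0$, contradicting $f>0$ on $(0,a)$ (which you correctly deduce from strict concavity, $f(0)=0$ and $f\ge 0$), provided $2x<a$. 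One small repair is needed there: you must take $x\in(b,a/2)$ (nonempty under the hypothesis $b<a/2$), not an arbitrary $x\in(b,2b)$, since for $x$ near $2b$ the point $2x$ may leave $[0,a]$; your earlier remark ``taking $x\downarrow b$'' shows you saw this, but the final step as written overstates the range. The paper's proof is shorter and runs in the opposite direction: instead of negating the defining inequality just to the right of $b$, it verifies the inequality at the single point $a/2$. Since $c\le b<a/2$, strict concavity bounds the one-sided derivative by the chord slope, $|f'_-(a/2)|<\bigl(f(a/2)-f(a)\bigr)/(a-a/2)\le f(a/2)/(a/2)$ using only $f(a)\ge 0$, so $a/2$ itself belongs to the set whose supremum is $b$, whence $b\ge a/2$ at once. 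Both arguments rest on the same two facts --- concavity relates $f'_-$ to chord slopes, and $f$ is nonnegative to the right --- but the paper's chord estimate at $a/2$ avoids both your auxiliary positivity lemma and the supporting-line construction.
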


\begin{proof}
Suppose that $b<a/2$. Then $c<a/2<a$, so by strict concavity,
\[
|f'_-(a/2)|<\frac{f(a/2)-f(a)}{a-a/2}\le\frac{f(a/2)}{a/2}.
\]
Thus, by the definition, $a/2\le b$.
\end{proof}

\begin{lemma}\label{fratio-dec}
Assume that $f$ is $\alpha$-concave in the interval $[0,a]$ with
$f(0) = 0$ and $\alpha>0$. Let $x,y \in (0,b)$ be such that
$x < y$ and $0 < f(y) < f(x)$. Then
\begin{equation}\label{eq-fd}
\frac{\kappa(f(x),f(y))}{\kappa(x, y)} <
1-\frac{\alpha b (b-x)}{f(b)}.
\end{equation}
\end{lemma}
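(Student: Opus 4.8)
The plan is to mimic the structure of the proof of Lemma~\ref{fratio-inc}, but now working with a \emph{decreasing} branch of $f$. On the interval $[c,b]$ the function $f$ is strictly decreasing, so for $x<y$ in $(0,b)$ that both lie to the right of $c$ we have $f(x)>f(y)>0$, and $\kappa(f(x),f(y)) = (f(x)-f(y))/f(y)$ while $\kappa(x,y)=(y-x)/x$. (If only $y$ lies to the right of $c$, or if $x<c<y$, one reduces to this case by replacing $x$ by $\max\{x,c\}$, since moving $x$ toward $c$ only decreases $f(x)-f(y)$ and increases $x$, hence only decreases the left-hand ratio; but I would first check whether the hypothesis $f(y)<f(x)$ already forces $c<x$, which would make this reduction unnecessary.) So the quantity to estimate is
\[
\frac{\kappa(f(x),f(y))}{\kappa(x,y)} = \frac{f(x)-f(y)}{f(y)}\cdot\frac{x}{y-x}.
\]

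First I would bound $f(x)-f(y)$ from above using concavity of $\fa(t)=f(t)+\alpha t^2$: the secant slope of $\fa$ on $[x,y]$ is at most the left derivative at $x$, or—better for getting the isoclinic point $b$ into the estimate—at least the secant slope on $[y,b]$, giving an inequality like
\[
\frac{f(x)-f(y)}{y-x} \;\le\; \frac{\fa(x)-\fa(y)}{y-x} \;\le\; \frac{\fa(0)-\fa(b)}{b-0}\text{-type bound},
\]
which after rearranging the $\alpha$-terms should produce $f(x)-f(y) \le (y-x)\bigl(\text{something} - \alpha(\cdot)\bigr)$ with the subtracted $\alpha$-term as large as possible. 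Second, I would bound $f(y)$ from below: since $y<b$, concavity of $\fa$ gives $\fa(y)/y \ge \fa(b)/b$, i.e.\ $f(y)/y \ge f(b)/b + \alpha(b-y)$, hence $f(y) \ge y\bigl(f(b)/b + \alpha(b-y)\bigr)$. Third, I would combine these two estimates, divide, and simplify the resulting rational expression in the same spirit as the chain of inequalities at the end of Lemma~\ref{fratio-inc}—pulling out a factor of the form $1 - \frac{\alpha(\cdots)}{\cdots}$ and then bounding the denominator crudely (e.g.\ $f(y)/y \le f(b)/b$ or similar) to arrive at the clean bound $1 - \frac{\alpha b(b-x)}{f(b)}$.

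The main obstacle I anticipate is the bookkeeping in Step~1 and Step~3: getting the \emph{right} secant comparison so that exactly $b-x$ (not $b-y$ or $y-x$) appears in the numerator of the subtracted term, and then choosing the denominator estimate so that it collapses to $f(b)$ rather than to $f(b)/b \cdot(\text{length})$. Since the target inequality is strict, I would also need $\alpha>0$ together with $x<b$ to guarantee the subtracted term is genuinely positive and that at least one of the concavity inequalities used is strict (this is where strict concavity of $\fa$, or the strict inequality $x<y$, enters). A secondary subtlety is making sure the endpoints behave: we need $f(b)>0$, which is noted before the lemma, so division by $f(b)$ is legitimate, and we need $b-x>0$, which holds since $x<b$ by hypothesis. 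Once the two one-sided concavity estimates are set up with the correct reference points, the rest is an algebraic simplification of the type already carried out in Lemma~\ref{fratio-inc}.
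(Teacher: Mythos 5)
Your decomposition of the ratio and your Step~2 (bounding $x/f(y)$ from above via $f(y)>f(b)$, which follows from $c<y<b$) match the paper's proof. The gap is in Step~1, which is exactly where the lemma lives. You need the upper bound $\frac{f(x)-f(y)}{y-x}\le\frac{f(b)}{b}-\alpha(b-x)$, and none of the secant comparisons you propose can deliver it. Concavity alone gives $\frac{f(x)-f(y)}{y-x}\le\frac{f(x)-f(b)}{b-x}=\frac{\fa(x)-\fa(b)}{b-x}+\alpha(b+x)$, but at this point you must bound $\frac{\fa(x)-\fa(b)}{b-x}$ from \emph{above}, and the comparison with the secant over $[0,b]$ that you suggest goes the wrong way: for concave $\fa$ one has $\frac{\fa(x)-\fa(b)}{b-x}\ge\frac{\fa(0)-\fa(b)}{b}$, a lower bound. (Also note $\frac{f(x)-f(y)}{y-x}\ge\frac{\fa(x)-\fa(y)}{y-x}$, not $\le$, since $x<y$.) The missing ingredient is the defining property of the isoclinic point: by left-continuity of $f'_-$ and the definition of $b$ as a supremum, $|f'_-(b)|\le f(b)/b$, whence by concavity of $\fa$,
\[
\frac{\fa(x)-\fa(b)}{b-x}\le-(\fa)'_-(b)=-f'_-(b)-2\alpha b\le\frac{f(b)}{b}-2\alpha b,
\]
and adding $\alpha(b+x)$ yields $\frac{f(b)}{b}-\alpha(b-x)$. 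Without this control on the steepness of the decreasing branch at $b$ the claimed bound is simply false for general concave $f$ (take $f$ dropping steeply just left of $b$), so this is not bookkeeping that ``should work out after rearranging''; it is the one place where the isoclinic point earns its definition.

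A secondary issue: the reduction replacing $x$ by $\max\{x,c\}$ is both unnecessary and wrongly justified. The hypotheses $x<y$ and $f(y)<f(x)$ force $y>c$ but not $x>c$, and the argument above works for any $x\in(0,y)$. Moreover, if $x<c$ then moving $x$ rightward toward $c$ \emph{increases} $f(x)$ (hence increases $f(x)-f(y)$), increases $x$, and decreases $y-x$, so the ratio grows; and simultaneously the target bound $1-\alpha b(b-x)/f(b)$ grows as $x$ increases, so proving the inequality at $x'=c$ would not imply it at the original $x$. Drop the reduction entirely.
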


\begin{proof}
With our assumptions we have
\begin{equation}\label{eq-fd0}
\frac{\kappa(f(x) , f(y))}{\kappa(x, y)}
= \frac{f(x) - f(y)}{f(y)}\cdot \frac{x}{y-x}.
\end{equation}
The assumptions $x < y$ and $f(y) < f(x)$ imply that $y > c$.
Hence, $f(b) < f(y)$ because $y < b$, so
\begin{equation}\label{eq-fd1}
\frac{x}{f(y)}<\frac{b}{f(b)}.
\end{equation}
Moreover, by the $\alpha$-concavity of $f$ we get
\begin{equation}\label{eq-fd2}
\begin{split}
\frac{f(x)-f(y)}{y-x} &\le\frac{f(x)-f(b)}{b-x}\\
&=\frac{\fa(x)-\alpha x^2-\fa(b)+\alpha b^2}{b-x}
=\frac{\fa(x)-\fa(b)}{b-x}+\alpha(b+x).
\end{split}
\end{equation}

The left one-sided derivative of $f$ is continuous from the left, and
therefore
\[
-f'_-(b) = |f'_-(b)|\le \frac{f(b)}{b}.
\]
By this and concavity of $f_\alpha$ we get
\[
\frac{\fa(x)-\fa(b)}{b-x}\le-(\fa)'_-(b)=-f'_-(b)-2\alpha b\le
\frac{f(b)}b-2\alpha b.
\]
Together with~\eqref{eq-fd2}, this gives
\[
\frac{f(x)-f(y)}{y-x}\le\frac{f(b)}b-\alpha(b-x).
\]
This inequality together with~\eqref{eq-fd0} and~\eqref{eq-fd1}
implies~\eqref{eq-fd}.
\end{proof}

\section{Nonautonomous systems}\label{sec-ns}

When we forget about the structure of the base space, our skew product
becomes a bunch of full orbits. With this in mind, in this section we
study the relation between concavity and contraction on the fibers for
nonautonomous dynamical systems by using the estimates and notions
introduced in Section~\ref{sec-ac}. To this end we introduce the
notion of \emph{equiconcavity} which makes the notion of
$\alpha$-concavity independent of the ``scale'' of each map in the
system (that is, independent on the supremum of each of these maps).

\begin{definition}
Let $(f_n)_{n=1}^{\infty}$ be a sequence of maps from the interval
$[0,a]$ to itself such that $f_n(0)=0$ for every $n$. Such a sequence
will be called \emph{pinched} when there exists an $n$ such that $f_n$
is identically zero. Also, it will be called \emph{equiconcave} if
there exists a positive constant $\beta$ such that each $f_n$ is
$\beta\gamma_n$-concave, where $\gamma_n$ is the supremum of $f_n$.
\end{definition}

We will consider a nonautonomous dynamical system given by a
sequence $(f_n)_{n=1}^{\infty}$. That is, we apply first $f_1$, then
$f_2$, etc. We will use the standard notation for the trajectories,
that is, if the starting point with index 0 belongs to $[0,a]$, then
we define by induction the point with index $n$ as the image under
$f_n$ of the point with index $n-1$. For instance, $x_1=f_1(x_0)$,
$x_2= f_2(x_1)$, etc.

Let us consider first the case when all maps $f_n$ are nondecreasing.

\begin{theorem}\label{ConcAttr}
Let $(f_n)_{n=1}^{\infty}$ be a sequence of monotone maps from the
interval $[0,a]$ to itself such that $f_n(0)=0$ for every $n$. Assume
also that this sequence is either pinched or equiconcave. Then for
every $x_0,y_0 \in (0,a]$ we have
\begin{equation}\label{tca}
\lim_{n\to\infty} |x_n -y_n| = 0.
\end{equation}
\end{theorem}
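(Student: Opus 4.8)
The plan is to use the function $\kappa$ and the estimate from Lemma~\ref{fratio-inc} to control the relative separation of two trajectories, and then to show that either pinching forces both trajectories to $0$ immediately, or equiconcavity forces the relative separation to decay geometrically along the orbit.

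First I would dispose of the pinched case: if some $f_N$ is identically zero then $x_N=y_N=0$, hence $x_n=y_n=0$ for all $n\ge N$ (since $f_n(0)=0$), and \eqref{tca} is trivial. So assume the sequence is equiconcave with constant $\beta$, and fix $x_0,y_0\in(0,a]$; without loss of generality $x_0\le y_0$. Since each $f_n$ is nondecreasing with $f_n(0)=0$, monotonicity is preserved along the orbit, so $x_n\le y_n$ for all $n$, and $x_n=0$ can only happen if $x_0$ got mapped to $0$, in which case (again using $f_n(0)=0$) we would have $x_n=0$ thereafter; if additionally $y_n\to 0$ we are done, and if $y_n\not\to 0$ then the $y$-orbit stays bounded away from $0$ while $\kappa(x_n,y_n)$ would be infinite — I would handle this degenerate subcase separately by noting $f_n$ nondecreasing and $f_n(x)>0$ for $x>0$ forces $x_n>0$ once $x_0>0$ (strict concavity from $\beta\gamma_n>0$, unless $\gamma_n=0$ i.e.\ $f_n\equiv0$, the pinched case). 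So in the genuine case $0<x_n\le y_n$ for all $n$ and $\kappa(x_n,y_n)$ is finite.

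The main step is the recursion $\kappa(x_{n},y_{n}) = \kappa(f_n(x_{n-1}),f_n(y_{n-1}))$. Applying Lemma~\ref{fratio-inc} to $f=f_n$ on $[0,y_{n-1}]$ (with $y=y_{n-1}$, $x=x_{n-1}$, noting $0<f_n(x_{n-1})\le f_n(y_{n-1})$ and that $f_n(y_{n-1})>0$ unless $y_{n-1}$ maps to $0$), and using that $f_n$ is $\beta\gamma_n$-concave with $\gamma_n\ge f_n(y_{n-1})$, I get
\[
\frac{\kappa(x_n,y_n)}{\kappa(x_{n-1},y_{n-1})} \le \frac{f_n(y_{n-1})}{f_n(y_{n-1})+\beta\gamma_n y_{n-1}^2} \le \frac{1}{1+\beta\gamma_n y_{n-1}^2/f_n(y_{n-1})} \le \frac{1}{1+\beta y_{n-1}^2},
\]
using $\gamma_n\ge f_n(y_{n-1})$ in the last step. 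Hence $\kappa(x_n,y_n)\le \kappa(x_0,y_0)\prod_{k=0}^{n-1}(1+\beta y_k^2)^{-1}$.

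Now I would argue that $\prod_k (1+\beta y_k^2)$ diverges, hence $\kappa(x_n,y_n)\to 0$, which gives $|x_n-y_n|=\kappa(x_n,y_n)\,x_n\le \kappa(x_n,y_n)\cdot a\to 0$. To see the divergence: if $\sum_k y_k^2<\infty$ then $y_k\to 0$, so $x_k\to 0$ too (as $x_k\le y_k$), and \eqref{tca} holds directly; otherwise $\sum_k y_k^2=\infty$ forces $\prod_k(1+\beta y_k^2)=\infty$. This dichotomy is the crux and I expect it to be the only subtle point — it cleanly separates ``trajectories already converge to $0$'' from ``they stay large enough that the contraction estimate accumulates,'' and in both cases \eqref{tca} follows. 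The remaining care is purely bookkeeping: making sure $y_{n-1}>0$ and $f_n(y_{n-1})>0$ at each application of the lemma, which I would phrase as: either $y_n=0$ for some $n$ (then done), or $y_n>0$ for all $n$ and the estimate runs unobstructed.
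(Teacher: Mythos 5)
Your proposal is correct and follows essentially the same route as the paper: dispose of the pinched case trivially, then in the equiconcave case telescope $\kappa(x_n,y_n)$ using Lemma~\ref{fratio-inc} together with $\gamma_n\ge f_n(y_{n-1})$ to get the factor $1/(1+\beta y_{n-1}^2)$, and conclude by a dichotomy on whether the $y_k$ stay away from $0$. The only (immaterial) difference is that you phrase the dichotomy via convergence or divergence of $\sum_k y_k^2$, whereas the paper uses ``$y_k\ge\eps$ infinitely often'' versus ``$y_k\to 0$''; both close the argument in the same way.
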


\begin{proof}
Assume first that $(f_n)_{n=1}^{\infty}$ is pinched and let $N$ be the
smallest positive integer such that $f_N$ is identically zero. Then
$x_N = y_N = 0$, and therefore $x_n = y_n = 0$ for every $n \ge N$.
This completes the proof of the theorem in this case.

Suppose now that $(f_n)_{n=1}^{\infty}$ is equiconcave. Without loss
of generality we may assume that $0 < x_0 < y_0$, and therefore $0 <
x_n < y_n$ for every $n$. Then we have
\begin{equation}\label{prod}
|x_n-y_n|= x_n\kappa(x_n,y_n)< a\kappa(x_n,y_n)=a\kappa(x_0,y_0)
\prod_{k=0}^{n-1}\frac{\kappa(x_{k+1},y_{k+1})}{\kappa(x_k,y_k)}.
\end{equation}
By Lemma~\ref{fratio-inc} and since $\gamma_{k+1}\ge y_{k+1}$, we have
\[
\frac{\kappa(x_{k+1},y_{k+1})}{\kappa(x_k,y_k)}\le\frac{y_{k+1}}
{y_{k+1}+\beta\gamma_{k+1}y_k^2}\le\frac1{1+\beta y_k^2}.
\]

If there is $\eps>0$ such that $y_k\ge\eps$ for infinitely many
indices $k$, then by the above estimate infinitely many terms
$\kappa(x_{k+1},y_{k+1})/\kappa(x_k,y_k)$ are bounded from above by
$1/(1+\beta\eps^2)$, which is smaller than 1. Taking into account that
by Lemma~\ref{fratio-inc} $\kappa(x_{k+1},y_{k+1})/\kappa(x_k,y_k)<1$
for all $k$, we see that in this case the product in~\eqref{prod} goes
to 0 as $n\to\infty$. Thus, by~\eqref{prod}, \eqref{tca} holds.

If there is no such $\eps$, then by the definition of the limit,
$\lim_{n\to\infty}y_n=0$, and since $0<x_n<y_n$, \eqref{tca} also holds.
\end{proof}

Now we discard the assumption of monotonicity of the maps $f_n$.
However, we need some bound on the points $x_n$ and $y_n$. In a
general case we cannot expect any contraction, as the simple example
of $f_n(x)=4x(1-x)$ and $a=1$ shows. Let $b_n$ be the isoclinic point
of $f_n$.

The following theorem is a generalization of Theorem~\ref{ConcAttr}.

\begin{theorem}\label{CAnonmonot}
Let $(f_n)_{n=1}^{\infty}$ be a sequence of maps from the interval
$[0,a]$ to itself such that $f_n(0)=0$ for every $n$. Assume also that
this sequence is either pinched or equiconcave. Then for every
$x_0,y_0 \in (0,a]$ such that $x_n,y_n<b_n$ for all $n$, we have
\begin{equation}\label{tca1}
\lim_{n\to\infty} |x_n -y_n| = 0.
\end{equation}
\end{theorem}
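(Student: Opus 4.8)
The plan is to mimic the proof of Theorem~\ref{ConcAttr}, replacing Lemma~\ref{fratio-inc} by Lemma~\ref{fratio-dec} at the steps where the fiber maps are locally decreasing. As before, the pinched case is immediate: if $f_N\equiv 0$ then $x_n=y_n=0$ for all $n\ge N$. So assume the sequence is equiconcave with constant $\beta$, and fix $x_0,y_0\in(0,a]$ with, say, $x_0<y_0$; by the hypothesis $x_n,y_n\in(0,b_n)$ for all $n$. The key point is that on the interval $(0,b_n)$ the map $f_n$ need not be monotone, but Lemmas~\ref{fratio-inc} and~\ref{fratio-dec} together cover both possibilities for the ordering of $f_n(x_{n-1})$ and $f_n(x_n)$, and in either case they give a bound of the form $\kappa(x_{n+1},y_{n+1})/\kappa(x_n,y_n)<1$, with a quantitative gap below $1$ once $x_n$ (equivalently $y_n$) is bounded away from $0$.

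First I would record that, since $f_n$ is strictly increasing on $[0,c_n]$ and strictly decreasing on $[c_n,b_n]$, for any two points $x_n<y_n$ in $(0,b_n)$ we are in exactly one of two situations: either $f_n(x_n)<f_n(y_n)$, to which Lemma~\ref{fratio-inc} applies (with $y$ there replaced by $y_n$, noting $y_n<b_n\le a$ and $\alpha=\beta\gamma_n$), giving
\[
\frac{\kappa(x_{n+1},y_{n+1})}{\kappa(x_n,y_n)}\le\frac{f_n(y_n)}{f_n(y_n)+\beta\gamma_n y_n^2}\le\frac{1}{1+\beta y_n^2},
\]
using $\gamma_n\ge f_n(y_n)$; or $f_n(y_n)<f_n(x_n)$, to which Lemma~\ref{fratio-dec} applies, giving
\[
\frac{\kappa(x_{n+1},y_{n+1})}{\kappa(x_n,y_n)}<1-\frac{\beta\gamma_n\, b_n(b_n-x_n)}{f_n(b_n)}\le 1-\beta b_n(b_n-x_n),
\]
using $\gamma_n\ge f_n(b_n)$. (The boundary case $x_n=c_n$ or the degenerate case where the two $\kappa$-values need care can be folded in by continuity, or by observing $f_n$ is injective on a neighborhood of $x_n$ unless $x_n=c_n$, in which case one compares with a nearby point; I would phrase the ordering trichotomy to absorb this cleanly.) In both cases the ratio is strictly less than $1$, so the telescoping estimate~\eqref{prod}, which only used $0<x_n<y_n$ and $x_n<a$, still holds verbatim here.

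Then, exactly as in Theorem~\ref{ConcAttr}, I split into two cases. If there is $\eps>0$ and infinitely many $k$ with $y_k\ge\eps$, I want to extract from those indices a uniform contraction factor strictly below $1$. This is the step that requires the most care, because in the decreasing case the gap $1-\beta b_k(b_k-x_k)$ depends on $b_k$ and on how close $x_k$ is to $b_k$, neither of which is controlled a priori. I would handle it by noting that $y_k\ge\eps$ forces $y_k<b_k$, hence $b_k>\eps$; and I would further subdivide according to whether $x_k$ stays away from $b_k$ (say $x_k\le(1-\delta)b_k$ for a fixed small $\delta$), which gives a uniform gap $1-\beta\delta b_k^2\le 1-\beta\delta\eps^2<1$ in the decreasing case; while if $x_k$ is very close to $b_k$, then $x_k\ge(1-\delta)b_k>(1-\delta)\eps$ is itself bounded below, so the increasing-case bound $1/(1+\beta x_k^2)$—or the symmetric estimate obtained by feeding $x_k$ into Lemma~\ref{fratio-inc}—applies at step $k$ with a uniform gap. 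The upshot is that infinitely many factors in~\eqref{prod} are bounded above by some fixed $\lambda<1$ while all factors are $<1$, so the product tends to $0$ and~\eqref{tca1} follows. In the remaining case there is no such $\eps$, so $y_n\to 0$, and since $0<x_n<y_n$ we again get~\eqref{tca1}.

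The main obstacle, as indicated, is the non-uniformity of the contraction constant in the decreasing regime of Lemma~\ref{fratio-dec}: unlike the monotone case, where $1/(1+\beta y_k^2)$ is a clean decreasing function of $y_k$, here the relevant quantity is $b_k(b_k-x_k)$, and one must argue that whenever $y_k$ is bounded below, \emph{some} one of the two lemmas supplies a uniform spectral gap. The trick is to play the two estimates against each other via the auxiliary threshold $(1-\delta)b_k$, using Lemma~\ref{isoclinic} ($b_k\ge a/2$) if one wants an even cleaner bound, so that the case $x_k$ near $b_k$ is precisely the case where $x_k$ is bounded away from $0$ and the increasing estimate bites.
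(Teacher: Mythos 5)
Your overall strategy --- the telescoping product \eqref{prod} together with a case split between increasing steps (Lemma~\ref{fratio-inc}) and decreasing steps (Lemma~\ref{fratio-dec}) --- is the same as the paper's, and your subcases (increasing with $y_k\ge\eps$; decreasing with $x_k\le(1-\delta)b_k$) are handled correctly. The gap is in your last subcase: a decreasing step ($x_k<y_k$ but $x_{k+1}>y_{k+1}$) with $x_k>(1-\delta)b_k$. You claim that, since $x_k$ is then bounded below, ``the increasing-case bound $1/(1+\beta x_k^2)$, or the symmetric estimate obtained by feeding $x_k$ into Lemma~\ref{fratio-inc}, applies with a uniform gap.'' It does not: Lemma~\ref{fratio-inc} requires $f(x)<f(y)$, which is exactly what fails at a decreasing step, and no one-step estimate with a uniform gap can hold there. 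Indeed, if $x_k,y_k\to b_k^-$ then
\[
\frac{\kappa(f(x_k),f(y_k))}{\kappa(x_k,y_k)}
=\frac{f(x_k)-f(y_k)}{y_k-x_k}\cdot\frac{x_k}{f(y_k)}
\longrightarrow |f'_-(b_k)|\cdot\frac{b_k}{f(b_k)},
\]
which in general equals $1$ (this is precisely why the bound of Lemma~\ref{fratio-dec} degenerates as $x\to b$). So if all but finitely many of the indices with $y_k\ge\eps$ fall into this subcase, your argument produces no uniformly contracting factors, the product in \eqref{prod} need not tend to $0$, and the proof stalls.

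The repair --- and this is what the paper does --- is to notice that in this subcase you need no contraction at all: since $x_k<y_k<b_k$, you have $0<y_k-x_k<b_k-x_k$, so closeness of $x_k$ to $b_k$ directly forces $|x_k-y_k|$ to be small. Accordingly, the dichotomy should be run on different quantities for the two kinds of steps: ask whether there are infinitely many $k$ with either ($x_{k+1}<y_{k+1}$ and $y_k\ge\eps$) or ($x_{k+1}>y_{k+1}$ and $b_k-x_k\ge\eps$). If yes, Lemma~\ref{fratio-inc} and Lemma~\ref{fratio-dec} (the latter combined with Lemma~\ref{isoclinic}, which yields the factor $1-\beta a(b_k-x_k)/2$) give infinitely many factors bounded away from $1$, and the product tends to $0$. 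If no, the sequence $(x_k-y_k)$ splits into two subsequences: on one $x_k,y_k\to 0$, on the other $b_k-x_k\to 0$ and hence $b_k-y_k\to 0$, and on both $|x_k-y_k|\to 0$ directly. Once this observation is in place, your auxiliary threshold $(1-\delta)b_k$ becomes unnecessary.
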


\begin{proof}
In the pinched case the proof is exactly the same as for
Theorem~\ref{ConcAttr}. In the equiconcave case, it is very similar,
so we only point out the differences.

The first difference is that it may happen that $x_N=y_N$ for some
$n$, but then $x_n=y_n$ for all $n>N$, so~\eqref{tca1} holds.

The second difference is that it may happen that $x_k>y_k$; then we
just switch the roles of $x$ and $y$.

The third, and most important difference is that it may happen that
$x_k<y_k$ but $x_{k+1}>y_{k+1}$ (or $x_k>y_k$ but $x_{k+1}<y_{k+1}$,
then again we switch the roles of $x$ and $y$). In this case, instead
of Lemma~\ref{fratio-inc} we use Lemma~\ref{fratio-dec}. We get,
taking into account that $f(b_k)\le\gamma_k$ and
Lemma~\ref{isoclinic},
\begin{equation}\label{closetob}
\frac{\kappa(x_{k+1},y_{k+1})}{\kappa(x_k,y_k)}\le 1-\frac{\beta
\gamma_kb_k(b_k-x_k)}{f(b_k)}\le 1-\frac{\beta a(b_k-x_k)}2.
\end{equation}

Now, instead of checking whether there are infinitely many indices $k$
for which $y_k\ge\eps$, we check whether there are infinitely many
indices $k$ for which either $x_{k+1}<y_{k+1}$ and $y_k\ge\eps$ or
$x_{k+1}>y_{k+1}$ and $b_k-x_k\le\eps$ (we assume that $x_k<y_k$).
If yes, we get~\eqref{tca1} in the same way as in the proof of
Theorem~\ref{ConcAttr}, but taking additionally into account
inequality~\eqref{closetob}. If there is no $\eps>0$ for which this is
true, then the sequence $(x_k-y_k)$ splits into two subsequences. On
one of them both $x_k$ and $y_k$ go to 0, so $x_k-y_k\to 0$; on the
other one both $b_k-x_k$ and $b_k-y_k$ go to 0, so also $x_k-y_k\to
0$.
\end{proof}

\section{Nonchaotic Equiconcave Attractors}\label{sec-nea}

Let us now analyze how the results of Section~\ref{sec-ns} fit into
the scheme presented in Section~\ref{sec-spa}.

We use the notation from Section~\ref{sec-spa}.

\begin{definition}\label{eqcfam}
If $Y=[0,a]$, the family $\{\psi_\te\}_{\te\in B}$ will be called
\emph{equiconcave} if there exists a positive constant $\beta$ such
that each $\psi_\te$ is $\beta\gamma_\te$-concave, where $\gamma_\te$
is the supremum of $\psi_\te$. Note that now we included the pinched
case in the definition of equiconcavity. Indeed, if $\psi_\te$ is
identically 0 then $\gamma_\te=0$ and $\psi_\te$ is 0-concave.
\end{definition}

\begin{definition}\label{eqcskew}
If $Y=[0,a]$ and the family $\{\psi_\te\}_{\te\in B}$ satisfies
$\psi_\te(0)=0$ for each $\te\in B$ and is equiconcave, then we will
call the system $(X,F)$ an \emph{equiconcave skew product}. If
additionally all functions $\psi_\te$ are monotone, the system will be
called a \emph{monotone equiconcave skew product}. If we replace the
assumption of monotonicity by the assumption that
\[
\psi_\te([0,a])\subset [0,b_{R(\te)})
\]
for all $\te\in B$, where $b_{R(\te)}$ is the isoclinic point of
$\psi_{R(\te)}$, then the system will be called an \emph{isoclinic
equiconcave skew product}. Remember that the isoclinic point of a
monotone function is $a$. Hence, every monotone equiconcave skew
product is an isoclinic equiconcave skew product.
\end{definition}

In many standard examples of systems with strange nonchaotic
attractors one defines $\psi$ as a product: $\psi(\te,x)=f(x) g(\te)$.
In those examples, if $f$ is $\alpha$-concave for some $\alpha>0$ then
$\{\psi_\te\}_{\te\in B}$ is equiconcave and the isoclinic point for
all $\psi_\te$ is the same as for $f$. In particular, if additionally
$f$ is monotone, then we get a monotone equiconcave skew product, and
if the product of the maxima of $f$ and $g$ is smaller than the
isoclinic point for $f$ then we get an isoclinic equiconcave skew
product.

To state the results in a short way, we need more definitions.

\begin{definition}\label{preinv-def}
A graph of a function $\phi:B\to [0,a]$ will be called
\emph{preinvariant} if for every $\te\in B$ there exists $N$ such that
for every $n\ge N$ we have
\begin{equation}\label{preinv}
F(R^n(\te),\phi(R^n(\te)))=(R^{n+1}(\te),\phi(R^{n+1}(\te))).
\end{equation}
A point $\te\in B$ will be called \emph{pinching} if there are
infinitely many positive integers $n$ such that $\psi_{R^n(\te)}$ is
identically equal to 0.
\end{definition}

First we do not endow $B$ with any extra structure.

\begin{theorem}\label{preinvattr}
Let the system $(X,F)$ with base $B$ and fiber space $[0,a]$ be an
isoclinic equiconcave skew product and let $\phi:B\to [0,a]$ be a
preinvariant function, positive at any point that is not pinching.
Then the graph of $\phi$ is an attractor with the basin of attraction
containing all points whose forward trajectory does not pass through
$B\times \{0\}$.
\end{theorem}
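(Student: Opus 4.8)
The plan is to reduce Theorem~\ref{preinvattr} to the nonautonomous results of Section~\ref{sec-ns} (Theorems~\ref{ConcAttr} and \ref{CAnonmonot}) applied fibrewise along forward trajectories of $R$. Fix $\te\in B$ and a point $(\te,x_0)$ with $x_0\in[0,a]$ whose forward trajectory never meets $B\times\{0\}$; we must show that the distance of $F^n(\te,x_0)$ from the point $(R^n(\te),\phi(R^n(\te)))$ in the fiber over $R^n(\te)$ tends to $0$, i.e. $\pi_2(F^n(\te,x_0))\to\phi$ along the orbit. The natural device is to form, for the given $\te$, the sequence of fiber maps $f_n:=\psi_{R^{n-1}(\te)}:[0,a]\to[0,a]$. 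Equiconcavity of the family $\{\psi_\te\}_{\te\in B}$ with constant $\beta$ immediately gives that $(f_n)_{n\ge1}$ is an equiconcave sequence (with the same $\beta$, since $\gamma_n$ in the sense of Section~\ref{sec-ns} equals $\gamma_{R^{n-1}(\te)}$), and the isoclinic condition $\psi_\te([0,a])\subset[0,b_{R(\te)})$ translates exactly into the hypothesis $x_n,y_n<b_n$ needed in Theorem~\ref{CAnonmonot} (in the monotone case $b_n=a$ and Theorem~\ref{ConcAttr} applies). The orbit $\pi_2(F^n(\te,x_0))$ is precisely the trajectory $(x_n)$ of $x_0$ under this nonautonomous system.

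The second ingredient is to produce a ``reference trajectory'' converging to $\phi$. By preinvariance of $\phi$ there is $N$ such that for all $n\ge N$, $f_{n+1}(\phi(R^n(\te)))=\phi(R^{n+1}(\te))$; in other words, starting from the point $y_N:=\phi(R^N(\te))$ and iterating the $f_n$ we obtain exactly $y_n=\phi(R^n(\te))$ for all $n\ge N$. So $(y_n)_{n\ge N}$ is an honest trajectory of the (shifted) nonautonomous system. Now I distinguish two cases. If $\te$ is pinching, then some $\psi_{R^n(\te)}$ is identically zero; but then $\pi_2(F^{n+1}(\te,x_0))=0$, contradicting the hypothesis that the forward trajectory avoids $B\times\{0\}$ — so under our hypotheses $\te$ is non-pinching, hence $\phi(\te)>0$ by assumption, and moreover no $f_n$ is identically zero (else again the forward trajectory of $(\te,\phi(\te))$, which is the orbit of $\phi$, would hit $0$; one checks via preinvariance that this forces $\phi(R^m(\te))=0$ for large $m$, and then pulling back along $f_n$'s using that each is strictly increasing near $0$ by strict concavity... — more simply: a zero fiber map would make $y_n=0$ eventually, contradicting positivity of $\phi$ at non-pinching points along the orbit). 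Thus the sequence $(f_n)$ is equiconcave and \emph{not} pinched, and both reference and test trajectories start at strictly positive points.

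Applying Theorem~\ref{CAnonmonot} (or Theorem~\ref{ConcAttr} in the monotone case) to the two trajectories $(x_n)$ and $(y_n)$ — both in $(0,a]$, both satisfying the isoclinic bound — we conclude $|x_n-y_n|\to0$, that is, $|\pi_2(F^n(\te,x_0))-\phi(R^n(\te))|\to0$. Since this is exactly fibrewise convergence to the graph of $\phi$, the point $(\te,x_0)$ lies in the basin of the graph. As $\te$ and $x_0$ were arbitrary subject to the forward trajectory avoiding $B\times\{0\}$, the basin contains all such points. It remains to check the definitional requirements for ``attractor'': the intersection of the graph with each fiber is a single point, hence compact; the basin contains the graph itself, because applying the argument to $x_0=\phi(\te)$ (legitimate precisely when $\te$ is non-pinching, and we saw pinching points have $\phi=0$ and their orbits meet $B\times\{0\}$, so they are excluded anyway) gives $x_n=y_n$ eventually; and the basin meets each relevant fiber in $(0,a]$ or $[0,a]$ minus an endpoint, a neighborhood of the attractor point, so condition (c) holds.

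\textbf{Main obstacle.} The delicate point is the bookkeeping around pinching versus the "not identically zero" hypothesis of the equiconcave case: I must make sure that when $\phi$ is only \emph{preinvariant} (not invariant) the reference trajectory $(y_n)$ really is a genuine trajectory from some index $N$ on, and that positivity of $\phi$ at non-pinching points, combined with "forward trajectory avoids $B\times\{0\}$", rules out the pinched case cleanly for every relevant base point. Everything else is a direct translation of Section~\ref{sec-ns} into the skew-product language.
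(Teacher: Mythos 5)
Your proof is correct and follows essentially the same route as the paper's: translate the forward orbit into a nonautonomous sequence of fiber maps, use preinvariance to make $\phi$ a genuine reference trajectory from some index $N$ on, observe that avoidance of $B\times\{0\}$ forces every point of the orbit to be non-pinching (so both starting values are positive and the sequence is not pinched), and invoke Theorem~\ref{CAnonmonot}. Your extra bookkeeping around the pinched case and the verification of the attractor axioms is just a more detailed rendering of the paper's terse argument.
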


\begin{proof}
Let $(\te,x)$ be a point whose forward trajectory does not pass
through $B\times \{0\}$. By the definition of preinvariance, there is
$N$ such that~\eqref{preinv} holds for every $n\ge N$. None of the
points $R^n(\te)$, $n=0,1,2,\dots$, is pinched, so $\pi_2(F^N(\te,x))$
is positive. By the assumptions on $(\te,x)$, $\phi(R^N(\te))$ is also
positive. Therefore, by Theorem~\ref{CAnonmonot}, the distance between
$\pi_2(F^n(\te,x))$ and $\phi(R^n(\te))$ goes to 0 as $n\to\infty$.
\end{proof}

In the proof of the next theorem, we will use a similar method as in
the proof of Theorem~\ref{nbm}.

\begin{theorem}\label{nostructpi}
Let the system $(X,F)$ with base $B$ and fiber space $[0,a]$ be an
isoclinic equiconcave skew product. Then there exists a preinvariant
function $\phi:B\to [0,a]$, positive at any point that is not pinched.
\end{theorem}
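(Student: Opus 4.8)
The plan is to mimic the construction in the proof of Theorem~\ref{nbm}, building $\phi$ orbit by orbit. First I would decompose the base $B$ into the disjoint union of full orbits of $R$ (using the equivalence relation introduced before Theorem~\ref{nbm}). On each full orbit $\orb$ I would pick one representative $\te\in\orb$ and define $\phi$ along the forward trajectory $\te, R(\te), R^2(\te),\dots$ by following the dynamics of $F$: choose a value $\phi(\te)\in(0,a]$ if $\te$ is not pinched (and $\phi(\te)=0$ otherwise), and then set $\phi(R^{n+1}(\te)) = \psi_{R^n(\te)}(\phi(R^n(\te)))$, i.e.\ $\phi(R^{n}(\te)) = \pi_2(F^{n}(\te,\phi(\te)))$. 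By construction, \eqref{preinv} holds along this forward trajectory for every $n\ge 0$, so on the forward orbit of the representative we already have preinvariance with $N=0$.

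Next I would handle the rest of the full orbit $\orb$. For an arbitrary point $z\in\orb$, by definition of the equivalence relation there are $m,n\ge 0$ with $R^m(z)=R^n(\te)$; hence $R^k(z)$ lies on the forward trajectory of $\te$ for all $k\ge m$, and on that tail $\phi$ has already been defined so that \eqref{preinv} holds. Thus I only need to assign $\phi$ at the finitely-or-countably-many points of $\orb$ that are not in the forward orbit of $\te$ (the ``pre-images'' that branch off). At each such point $w$ I would simply set $\phi(w)$ to be any value in $(0,a]$ if $w$ is not pinched, and $0$ if $w$ is pinched; since preinvariance only constrains the behavior for all sufficiently large $n$, these choices are irrelevant to \eqref{preinv}. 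Doing this simultaneously for all full orbits (invoking the axiom of choice to pick the representatives) yields a function $\phi:B\to[0,a]$ that is preinvariant and positive at every non-pinched point.

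The one genuine point to verify is that the forward construction never forces $\phi$ to vanish at a non-pinched point, and that it stays in $[0,a]$. The latter is immediate since each $\psi_\te$ maps $[0,a]$ into $[0,a]$. For the former: if $\te$ is not pinched we chose $\phi(\te)>0$, and then $\phi(R^{n+1}(\te))=\psi_{R^n(\te)}(\phi(R^n(\te)))$; as long as $\phi(R^n(\te))>0$ and $\psi_{R^n(\te)}$ is not identically zero (equivalently $R^n(\te)$ is not pinched), strict positivity is preserved because a nonnegative concave function on $[0,a]$ vanishing at $0$ and not identically zero is strictly positive on $(0,a)$ — and if $R^n(\te)$ \emph{is} pinched then $R^{n+1}(\te)$ is pinched too (its $\psi$-value at every point is $0$), consistently with the requirement. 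So the value $\phi(R^n(\te))$ is positive exactly when $R^n(\te)$ is not pinched, which is what we need. I expect this positivity bookkeeping, together with checking that ``$\te$ pinched'' propagates correctly along the orbit, to be the only mildly delicate step; everything else is a routine transcription of the argument for Theorem~\ref{nbm}.
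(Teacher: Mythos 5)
Your overall strategy --- decompose $B$ into full orbits, push a chosen positive value forward from one representative, and fill in the remaining points of each orbit arbitrarily --- is exactly the paper's, but two steps break down. First, your recipe $\phi(R^{n}(\te)) = \pi_2(F^{n}(\te,\phi(\te)))$ is not well defined when the orbit $\orb$ is periodic or preperiodic: if $R^{n}(\te)=R^{m}(\te)$ with $n\ne m$, the two prescribed values $\pi_2(F^{n}(\te,\phi(\te)))$ and $\pi_2(F^{m}(\te,\phi(\te)))$ need not agree; they agree only if the starting value is chosen as a fixed point of the return map $F^{k}$ restricted to the fiber over a periodic point. The paper isolates the periodic/preperiodic orbits as a separate case and starts the construction from such a fixed point, and you need an analogous case distinction.

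Second, your positivity argument rests on reading ``$\te$ is pinched'' as ``$\psi_\te$ is identically zero,'' but Definition~\ref{preinv-def} declares a point pinching when \emph{infinitely many} of its forward fiber maps $\psi_{R^{n}(\te)}$ are identically zero. This is a tail property, hence constant along each full orbit; on a non-pinching orbit there may still be finitely many indices $n$ with $\psi_{R^{n}(\te)}$ identically zero. Pushing forward through such an index forces $\phi=0$ at $R^{n+1}(\te)$ and at every later point of the forward orbit, and none of those points is pinching --- so your $\phi$ can vanish at non-pinched points, contradicting the conclusion. The paper's fix is to choose the representative $\te_0$ far enough along the orbit that $\psi_{R^{n}(\te_0)}$ is never identically zero for $n\ge 0$, which is possible precisely because the orbit is not pinching. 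A smaller instance of the same slip: allowing $\phi(\te)=a$ can give $\psi_\te(a)=0$ for a nonmonotone concave fiber map, since your ``strictly positive on $(0,a)$'' observation does not cover the endpoint $a$.
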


\begin{proof}
Since we do not require any special behavior of $\phi$ across the
fibers, it is enough to define $\phi$ on each full orbit $\orb$ of
$R$. We will consider three possible cases.

The first case is when there is $\te\in\orb$ which is pinched. Then
every element of $\orb$ is pinched. In this case we set $\phi$
identically 0 on $\orb$.

The second case is when no element of $\orb$ is pinched and $\orb$ is
neither periodic nor preperiodic. Then we fix $\te_0\in\orb$ such that
for $n=0,1,2,\dots$ the map $\psi_{R^n(\te_0)}$ is not identically 0,
choose some value $a_0\in (0,a]$ and set
\begin{equation}\label{pr}
\phi(R^n(\te_0))=\pi_2(F^n(\te_0,a_0))
\end{equation}
for $n=0,1,2,\dots$. At all other points of $\orb$ we set arbitrary
positive values of $\phi$.

The third case is when no element of $\orb$ is pinched and $\orb$ is
periodic or preperiodic. Then we fix $\te_0\in\orb$ which is periodic
for $R$. Let $k$ be its period. Then $F^k$ restricted to the fiber
over $\te_0$ has a fixed point $(\te_0,0)$. If it has another fixed
point $(\te_0,x)$, we choose this $x$ as $a_0$. Otherwise, we set
$a_0=0$. Then we set~\eqref{pr} for $n=0,1,2,\dots,k-1$. At all other
points of $\orb$ we set arbitrary positive values of $\phi$.

It is clear that the function $\phi$ constructed in the way described
above is preinvariant and is positive at any point that is not
pinched.
\end{proof}

From the above two theorems we obtain immediately the following
corollary.

\begin{corollary}\label{nostructattr}
Let the system $(X,F)$ with base $B$ and fiber space $[0,a]$ be an
isoclinic equiconcave skew product. Then there exists a function
$\phi:B\to [0,a]$, whose graph is an attractor with the basin of
attraction containing all points whose forward trajectory does not
pass through $B\times \{0\}$.
\end{corollary}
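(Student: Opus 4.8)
The plan is to derive Corollary~\ref{nostructattr} as a direct consequence of Theorems~\ref{preinvattr} and~\ref{nostructpi}, so the only task is to check that these two results combine cleanly. Both theorems take as hypothesis exactly that $(X,F)$ is an isoclinic equiconcave skew product with base $B$ and fiber space $[0,a]$, which is the hypothesis of the corollary, so there is nothing to verify on the input side.

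First I would invoke Theorem~\ref{nostructpi} to produce a function $\phi\colon B\to[0,a]$ that is preinvariant and positive at every point of $B$ that is not pinched. Then I would feed this $\phi$ into Theorem~\ref{preinvattr}. To do so I must make sure the hypothesis of Theorem~\ref{preinvattr} is met, namely that $\phi$ is ``positive at any point that is not pinching.'' Here a small subtlety arises: Theorem~\ref{nostructpi} gives positivity at points that are not \emph{pinched}, while Theorem~\ref{preinvattr} asks for positivity at points that are not \emph{pinching} (in the sense of Definition~\ref{preinv-def}, where $\te$ is pinching if $\psi_{R^n(\te)}$ is identically $0$ for infinitely many $n$). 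Since a pinching point is in particular pinched, the set of non-pinched points is contained in the set of non-pinching points only in the wrong direction; but what actually matters is that on any full orbit the construction in Theorem~\ref{nostructpi} sets $\phi\equiv 0$ precisely when some (hence every) element of the orbit is pinched, and otherwise sets $\phi$ to strictly positive values everywhere on that orbit. Hence $\phi$ is positive at \emph{every} non-pinched point, a fortiori at every non-pinching point, so the hypothesis of Theorem~\ref{preinvattr} holds.

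With $\phi$ preinvariant and positive at all non-pinching points, Theorem~\ref{preinvattr} immediately yields that the graph of $\phi$ is an attractor whose basin of attraction contains all points $(\te,x)\in X$ whose forward trajectory does not pass through $B\times\{0\}$. This is exactly the conclusion of the corollary, so the proof is complete in two lines.

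I do not expect any genuine obstacle here; the corollary is a formal consequence of the two preceding theorems. The only point requiring a moment's care is the bookkeeping between the notions of ``pinched point'' and ``pinching point,'' and confirming that the $\phi$ manufactured in Theorem~\ref{nostructpi} is positive on a large enough set to satisfy the positivity hypothesis of Theorem~\ref{preinvattr}; once that is observed the result follows at once.
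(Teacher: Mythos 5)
Your proposal is correct and is exactly the paper's argument: the paper derives the corollary ``immediately'' from Theorems~\ref{preinvattr} and~\ref{nostructpi}, and you do the same. One quibble: your phrase ``a fortiori at every non-pinching point'' runs the inclusion the wrong way (non-pinching points form the \emph{larger} set, so positivity at non-pinched points does not formally imply positivity at non-pinching ones); however, your preceding observation --- that the constructed $\phi$ vanishes only on orbits containing a pinched point, and the proof of Theorem~\ref{preinvattr} only ever needs positivity of $\phi$ at points whose forward trajectory avoids $B\times\{0\}$, which are necessarily non-pinched --- is the correct way to reconcile the paper's pinched/pinching terminology mismatch.
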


Observe that using this method we cannot always make an attractor
invariant. If $\phi$ is defined at $R(\te)$, we can try to define
$\phi(\te)$ as $\psi_\te^{-1}(\phi(R(\te)))$, but it may happen that
the image of $[0,a]$ under $\psi_\te$ does not contain
$\phi(R(\te))$.

Note that in general at this stage we do not have any uniqueness of
the attractor. Indeed, our choices in the construction were to a great
degree arbitrary.

Now we introduce topology. We assume that $B$ is a compact metric
space and that $F$ is continuous. We can ask whether the function
$\phi$, whose graph is an attractor, can be chosen continuous,
semicontinuous, Borel, etc. As we will see later, if $R$ is not a
homeomorphism then the situation may be very complicated. Similarly,
the general isoclinic equiconcave case is more complicated than the
monotone equiconcave one (see~\cite{AlsMis}). Thus, for simplicity let
us restrict our attention to a monotone equiconcave skew product with
a homeomorphism in the base. This makes sense, since we are presenting
mainly counterexamples.

Example~\ref{noinvattr} shows that even with those strong assumptions
we cannot count on getting an invariant attractor. However, in this
example a preinvariant attracting graph of a continuous function of
course exists; just take $\phi$ identically equal to 1.

On the other hand, if $(B,R)$ is the circle with an irrational
rotation and one of the maps $\psi_\te$ is identically equal to 0,
then a continuous function $\phi$ whose graph is an attractor has to
be equal 0 on a dense subset of a circle, so it has to be identically
0. However, one can easily produce examples where such $\phi$ is not
an attractor (see~\cite{Keller}). This means that requiring $\phi$ to be
continuous will not work.

\begin{question}
Let $(X,F)$ with base $B$ and fiber space $[0,a]$ be a monotone
equiconcave skew product, for which the base map is a homeomorphism.
Does it follow that there exists a Borel function $\phi:B\to [0,a]$,
whose graph is an attractor with the basin of attraction containing
all points whose forward trajectory does not pass through $B\times
\{0\}$?
\end{question}

Now we consider the case when for a monotone equiconcave skew product
the base map $R$ is invertible and preserves an ergodic invariant
probability measure $\mu$ on $B$. This setup gives us more space for
maneuvers, because now the function $\phi$ has to be defined only
almost everywhere.

We construct first a pullback attractor. This was the method of
getting of invariant graph described for instance in~\cite{Keller}.
Namely, we set $\phi_n(\te)$ to be equal to the second component of
$F^n(\te,a)$. Since $F$ is monotone in the fibers, the sequence
$(\phi_n)_{n=0}^\infty$ is decreasing, and therefore convergent
pointwise on the whole $B$. Denote its limit by $\phi_K$. From the
definition it follows immediately that the graph of $\phi_K$ is
invariant. Therefore the set of points at which $\phi_K$ is 0, is also
invariant. By ergodicity of $\mu$, we see that either $\phi_K$ is 0
almost everywhere, or it is positive almost everywhere. However, note
that Example~\ref{noinvattr} shows that $\phi_K$ (which is 0 at all
points $\te_n$) may be not an attractor everywhere.

\begin{theorem}\label{phik}
Let the system $(X,F)$ with base $B$ and fiber space $[0,a]$ be a
monotone equiconcave skew product, and let the base map $R$ be
invertible and preserve an ergodic invariant probability measure $\mu$
on $B$. If the function $\phi_K$ is positive almost everywhere then
its graph is an attractor with the basin of attraction containing the
set $Z\times (0,a]$ for some set $Z\subset B$ of full measure $\mu$.
\end{theorem}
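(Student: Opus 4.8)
The plan is to verify directly that the graph of $\phi_K$ satisfies the three conditions in the definition of a (measure) attractor, using Theorem~\ref{ConcAttr} as the source of the fibrewise contraction. First I would record the two properties of $\phi_K$ obtained in the discussion preceding the statement: it is $\mu$-measurable, being the pointwise limit of the continuous functions $\phi_n$; and its graph is invariant, so that $\psi_\te(\phi_K(\te))=\phi_K(R(\te))$ for every $\te\in B$ and hence $\pi_2(F^n(\te,\phi_K(\te)))=\phi_K(R^n(\te))$ for all $n\ge 0$ and all $\te\in B$. Set $Z=\{\te\in B\colon \phi_K(\te)>0\}$; by hypothesis $\mu(Z)=1$, and $\phi_K(\te)\in(0,a]$ whenever $\te\in Z$.

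The core step is a fibrewise application of Theorem~\ref{ConcAttr}. Fix $\te\in Z$ and consider the nonautonomous system given by the sequence $f_k=\psi_{R^{k-1}(\te)}$, $k\ge 1$. Each $f_k$ is a monotone self-map of $[0,a]$ with $f_k(0)=0$, and since the skew product is monotone equiconcave with constant $\beta$, each $f_k$ is $\beta\gamma_{R^{k-1}(\te)}$-concave with $\gamma_{R^{k-1}(\te)}=\sup f_k$; thus the sequence $(f_k)_k$ is equiconcave (this is exactly the content of Definition~\ref{eqcskew}). Starting this nonautonomous system at index $0$ from a point $x_0\in(0,a]$ produces the orbit $x_n=\pi_2(F^n(\te,x_0))$. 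Applying Theorem~\ref{ConcAttr} with $x_0=x$ for an arbitrary $x\in(0,a]$ and $y_0=\phi_K(\te)\in(0,a]$ gives
\[
\lim_{n\to\infty}\bigl|\,\pi_2(F^n(\te,x))-\pi_2(F^n(\te,\phi_K(\te)))\,\bigr|=0,
\]
and by invariance of the graph the second term equals $\phi_K(R^n(\te))$. Hence $(\te,x)$ lies in the basin of the graph of $\phi_K$ for every $\te\in Z$ and every $x\in(0,a]$; that is, the basin contains $Z\times(0,a]$, which is the last assertion of the statement.

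It remains to check the three attractor conditions for almost every fiber. Condition (1) is immediate, since the graph meets each fiber in a single point. Condition (2) holds for every fiber: invariance of the graph gives $F^n(\te,\phi_K(\te))=(R^n(\te),\phi_K(R^n(\te)))$, a point on the graph, so the fibrewise distance of $F^n(\te,\phi_K(\te))$ from the graph is $0$. Condition (3), in the measure-attractor sense, follows from the core step: for each $\te\in Z$ the slice of the basin over $\te$ contains $(0,a]$, which has full Lebesgue measure in $[0,a]$, hence positive measure, and $\mu(Z)=1$. Thus the graph of $\phi_K$ is a measure attractor with the stated basin.

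I do not anticipate a real obstacle: the argument is short, and the only points that need care are (a) that the sequence $(\psi_{R^{k-1}(\te)})_k$ is equiconcave with a constant independent of $k$ and $\te$ — which is built into the definition of a monotone equiconcave skew product — and (b) that we must use $Z=\{\phi_K>0\}$ and Theorem~\ref{ConcAttr} rather than the earlier Theorem~\ref{preinvattr}, since $\phi_K$ need only be positive $\mu$-almost everywhere, not at every non-pinching point. If a forward-invariant full-measure base set is wanted one may replace $Z$ by $\bigcap_{n\ge 0}R^{-n}(Z)$, but this refinement is not needed for the statement as phrased.
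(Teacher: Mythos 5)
Your proof is correct and follows essentially the same route as the paper: restrict to the full-measure set $Z=\{\te\in B\colon\phi_K(\te)>0\}$, use the invariance of the graph of $\phi_K$, and derive the basin containment from the fibrewise contraction given by equiconcavity. The only cosmetic difference is that you invoke Theorem~\ref{ConcAttr} directly along each orbit, whereas the paper packages the same contraction argument through Corollary~\ref{nostructattr} (i.e.\ Theorem~\ref{preinvattr} applied to the preinvariant function $\phi_K$ restricted to $Z$).
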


\begin{proof}
Assume that $\phi_K$ is positive almost everywhere. Let $Z$ be the set
of those points of $B$ at which $\phi_K$ is positive. This set is
invariant for $R$, so we can consider the restriction of $F$ to the
set $Z\times [0,a]$. This restriction satisfies the assumptions of
Corollary~\ref{nostructattr}, so the graph of $\phi_K$ restricted to
$Z$ is an attractor whose basin contains $Z\times (0,a]$. Since $Z$ is
of full measure, this completes the proof.
\end{proof}

\begin{question}
Does Theorem~\ref{phik} hold if we drop the assumption that $\phi_K$
is positive almost everywhere?
\end{question}



\end{document}